\setlist{nosep, leftmargin=*}
\begin{document}

\strutlongstacks{T}
\title[Simpler is better: a comparative study of randomized  algorithms for CUR and ID]{Simpler is better: a comparative study of randomized pivoting algorithms
for CUR and interpolative decompositions}
\author{Yijun Dong \and Per-Gunnar Martinsson}
\address{Oden Institute for Computational Engineering \& Sciences, University of Texas at Austin}

\maketitle

\begin{center}
\begin{minipage}{120mm}
\textbf{Abstract:}
Matrix skeletonizations like the interpolative and CUR decompositions provide a framework for low-rank approximation in which subsets of a given matrix's columns and/or rows are selected to form approximate spanning sets for its column and/or row space.
Such decompositions that rely on ``natural'' bases have several advantages over traditional low-rank decompositions with orthonormal bases, including preserving properties like sparsity or non-negativity, maintaining semantic information in data, and reducing storage requirements.
Matrix skeletonizations can be computed using classical deterministic algorithms such as column pivoted QR, which work well for small-scale problems in practice, but suffer from slow execution as the dimension increases and can be vulnerable to adversarial inputs.
More recently, randomized pivoting schemes have attracted much attention, as they have proven capable of accelerating practical speed, scale well with dimensionality, and sometimes also lead to better theoretical guarantees. 
This manuscript provides a comparative study of various randomized pivoting based matrix skeletonization algorithms that leverage classical pivoting schemes as building blocks.
We propose a general framework that encapsulates the common structure of these randomized pivoting based algorithms, and provide an a-posteriori-estimable error bound for the framework. Additionally, we propose a novel concretization of the general framework, and numerically demonstrate its superior empirical efficiency.

\vspace{2mm}

\textbf{Keywords:} 
Low rank approximation,
interpolative decomposition,
CUR decomposition,
randomized numerical linear algebra,
column pivoted QR factorization,
rank revealing factorization.
\end{minipage}
\end{center}

\section{Introduction}

The problem of computing a low-rank approximation to a matrix is a classical one that has drawn increasing attention due to its importance in the analysis of large data sets.
At the core of low-rank matrix approximation is the task of constructing bases that approximately span the column and/or row spaces of a given matrix.
This manuscript investigates algorithms for low-rank matrix approximations with ``natural bases'' of the column and row spaces -- bases formed by selecting subsets of the actual columns and rows of the matrix.
To be precise, given an $m \times n$ matrix $\Ab$ and a target rank $k < \min(m,n)$, we seek to determine an $m \times k$ matrix $\Cb$ holding $k$ of the columns of $\Ab$, and a $k \times n$ matrix $\Zb$ such that
\begin{equation}
\label{eq:colID}
\begin{array}{ccccccc}
\Ab &\approx& \Cb & \Zb.\\
m\times n && m\times k & k\times n
\end{array}
\end{equation}
We let $J_{\rm s}$ denote the index vector of length $k$ that
identifies the $k$ chosen columns, so that, in MATLAB notation,
\begin{equation}
\label{eq:Js}
\Cb = \Ab(\colon,J_{\rm s}).
\end{equation}
If we additionally identify an index vector $I_{\rm s}$ that marks a subset of the rows that forms an approximate basis for the row space of $\Ab$, we can then form the ``CUR'' decomposition
\begin{equation}
\label{eq:CUR}
\begin{array}{ccccccc}
\Ab & \approx & \Cb & \Ub & \Rb,\\
m\times n && m\times k & k\times k & k\times n
\end{array}
\end{equation}
where $\Ub$ is a $k\times k$ matrix, and
\begin{equation}
\label{eq:Is}
\Rb = \Ab(I_{\rm s},\colon).
\end{equation}
The decomposition (\ref{eq:CUR}) is also known as a ``matrix skeleton'' \cite{goreinov1997} approximation (hence  the subscript ``s'' for ``skeleton'' in $I_{\rm s}$ and $J_{\rm s}$).
Matrix decompositions of the form (\ref{eq:colID}) or (\ref{eq:CUR}) possess several compelling properties:
(i) Identifying $I_{\rm s}$ and/or $J_{\rm s}$ is often helpful in data interpretation.
(ii) The decompositions (\ref{eq:colID}) and (\ref{eq:CUR}) preserve important properties of the matrix $\Ab$. For instance, if $\Ab$ is sparse/non-negative, then $\Cb$ and $\Rb$ are also sparse/non-negative.
(iii) The decompositions (\ref{eq:colID}) and (\ref{eq:CUR}) are often memory efficient.
In particular, when the entries of $\Ab$ itself are available, or can inexpensively be computed or retrieved, then once $J_{\rm s}$ and $I_{\rm s}$ have been determined, there is no need to store $\Cb$ and $\Rb$ explicitly.

Deterministic techniques for identifying close-to-optimal index vectors $I_{\rm s}$ and $J_{\rm s}$ are well established. Greedy algorithms
such as the classical column pivoted QR (\pqr) \cite[Sec.~5.4.1]{golub2013},
and variations of LU with complete pivoting \cite{trefethen1997, zhao2005} 
often work well in practice. There also exist specialized pivoting schemes that come with strong theoretical performance guarantees \cite{gu1996}.

While effective for smaller dense matrices, classical techniques based on
pivoting become computationally inefficient as the matrix sizes grow. The
difficulty is that a global update to the matrix is in general required
before the next pivot element can be selected. The situation becomes
particularly dire for sparse matrices, as each update tends to create
substantial fill-in of zero entries.

To better handle large matrices, and huge sparse matrices in particular,
a number of algorithms based on
\textit{randomized sketching} have been proposed in recent years.
The idea is to extract a ``sketch'' $\Yb$ of the matrix that is
far smaller than the original matrix, yet contains enough information
that the index vectors $I_{\rm s}$ and/or $J_{\rm s}$ can be determined
by using the information in $\Yb$ alone. Examples include:

\begin{enumerate}
\item \textit{Discrete empirical interpolation method (DEIM):}
The sketching step consists of computing approximations to the dominant left and right
singular vectors of $\Ab$, for instance using the
randomized SVD (RSVD) \cite{halko2011,Liberty20167}.
Then a greedy pivot based scheme is used to pick the index sets $I_{\rm s}$ and $J_{\rm s}$
\cite{drmac2016qdeim,sorensen2014}.
\item \textit{Leverage score sampling:}
Again, the procedures start by computing approximations to the dominant left and right singular vectors of $\Ab$ through a randomized scheme. Then these approximations are used to compute probability distributions on the row and/or column indices, from which a random subset of columns and/or rows is sampled.
\item \textit{Pivoting on a random sketch:}
With a random matrix $\Omegab \in \mathbb{R}^{k\times m}$ drawn from some appropriate distribution, a sketch of $\Ab$ is formed via $\Yb = \Omegab\Ab$. Then, a classical pivoting strategy such as the \pqr is applied on $\Yb$ to identify a spanning set of columns.
\end{enumerate}

The existing literature \cite{anderson2017,chen2020,cohen2014,derezinski2020improved,derezinski2020dpp,drineas2012,drmac2016qdeim,gu1996,mahoney2009,sorensen2014,voronin2017} presents compelling evidence in support of each of these frameworks, in the form of mathematical theory and/or empirical numerical experiments.

The objective of the present manuscript is to organize different strategies, and to conduct a systematic comparison, with the focus on their empirical accuracy and efficiency.
In particular, we compare different strategies for extracting a random sketch, such as techniques based on
Gaussian random matrices \cite{halko2011,indyk1998,martinsson2020,woodruff2015},
random fast transforms \cite{boutsidis2013srtt,halko2011,martinsson2020, rokhlin2008,tropp2011,woolfe2008}, and
random sparse embeddings \cite{clarkson2017,martinsson2020,meng2013, nelson2013,tropp2017a,woodruff2015}. We also compare different pivoting strategies
such as pivoted QR \cite{golub2013,voronin2017}
versus pivoted LU \cite{chen2020,sorensen2014}. Finally, we compare how well sampling
based schemes perform in relation to pivoting based schemes.

In addition to providing a comparison of existing methods, the manuscript proposes a general framework that encapsulates the common structure shared by some popular randomized pivoting based algorithms, and presents an a-posteriori-estimable error bound for the framework. Moreover, the manuscript introduces a novel concretization of the general framework that is faster in execution than the schemes of \cite{sorensen2014, voronin2017}, while picking equally close-to-optimal skeletons in practice.
In its most basic version, our simplified
method for finding a subset of $k$ columns of $\Ab$ works as follows:

\vspace{2mm}

\begin{center}
\begin{minipage}{0.9\textwidth}
\textit{Sketching step:} Draw $\Omegab \in \mathbb{R}^{k\times m}$
from a Gaussian distribution and form $\Yb = \Omegab\Ab$.

\vspace{2mm}

\textit{Pivoting step:} Perform a \textit{partially pivoted} LU decomposition of
$\Yb^{*}$. Collect the chosen pivot indices in the index vector $J_{\rm s}$.
\end{minipage}
\end{center}

\vspace{2mm}

What is particularly interesting about this process is that while the
LU factorization with partial pivoting (\plu) is \textit{not}
rank revealing for a general matrix $\Ab$, the randomized mixing done
in the sketching step makes \plu excel at picking spanning columns. Furthermore, the randomness introduced by sketching empirically serves as a remedy for the vulnerability of classical pivoting schemes like \plu to adversarial inputs (\eg, the Kahan matrix \cite{kahan1966}).
The scheme can be accelerated further by incorporating a structured
random embedding $\Omegab$. Alternatively, its accuracy can be
enhanced by incorporating one of two steps of power iteration when
building the sample matrix $\Yb$.

The manuscript is organized as following:
\Cref{sec:background} provides a brief overview for the interpolative and CUR decompositions (\Cref{subsec:cur}), along with some essential building blocks of the randomized pivoting algorithms, including randomized linear embeddings (\Cref{subsec:embedding}), randomized low-rank SVD (\Cref{subsec:rsvd}), and matrix decompositions with pivoting (\Cref{subsec:pivoting_mat_decomp}).
\Cref{sec:exist_cur_algo} reviews existing algorithms for matrix skeletonizations (\Cref{subsec:sampling_cur}, \Cref{subsec:pivoting_cur}), and introduces a general framework that encapsulates the structures of some randomized pivoting based algorithms.
In \Cref{sec:cur_rand_lupp}, we propose a novel concretization of the general framework, and provide an a-posteriori-estimable bound for the associated low-rank approximation error.
With the numerical results in \Cref{sec:experiments}, we first compare the efficiency of various choices for the two building blocks in the general framework: randomized linear embeddings (\Cref{subsec:exp_embedding}) and matrix decompositions with pivoting (\Cref{subsec:exp_pivot}). Then, we demonstrate empirical advantages of the proposed algorithm by investigating the accuracy and efficiency of assorted randomized skeleton selection algorithms for the CUR decomposition (\Cref{subsec:exp_cur}).

\section{Background}\label{sec:background}
We first introduce some closely related low-rank matrix decompositions that rely on ``natural'' bases, including the CUR decomposition, and the column, row, and two-sided interpolative decompositions (ID) in \Cref{subsec:cur}.
\Cref{subsec:embedding} describes techniques for computing randomized sketches of matrices, based on which \Cref{subsec:rsvd} discusses the randomized construction of low-rank SVD. \Cref{subsec:pivoting_mat_decomp} describes how these can be used to construct matrix decompositions.
While introducing the background, we include proofs of some well-established facts that provide key ideas, but are hard to extract from the context of relevant references.

\subsection{Notation}\label{subsec:notation}
Let $\Ab \in \R^{m \times n}$ be an arbitrary given matrix of rank $r \le \min\cbr{m,n}$, whose SVD is given by
\begin{align*}
    \Ab = \underset{m \times r}{\Ub_{A}}\ \underset{r \times r}{\Sigmab_{A}}\ \underset{r \times n}{\Vb_{A}^{\top}} = \sbr{\ub_{A,1},\dots,\ub_{A,r}} \diag\rbr{\sigma_{A,1},\dots,\sigma_{A,r}} \sbr{\vb_{A,1},\dots,\vb_{A,r}}^{\top}
\end{align*}
such that for any rank parameter $k \leq r$, we denote $\Ub_{A,k} \triangleq \sbr{\ub_{A,1},\dots,\ub_{A,k}}$ and $\Vb_{A,k} \triangleq \sbr{\vb_{A,1},\dots,\vb_{A,k}}$ as the orthonormal bases of the dimension-$k$ leading left and right singular subspaces of $\Ab$, while $\oc{\Ub_{A,k}} \triangleq\sbr{\ub_{A,k+1},\dots,\ub_{A,r}}$ and $\oc{\Vb_{A,k}} \triangleq\sbr{\vb_{A,k+1},\dots,\vb_{A,r}}$ as the orthonormal bases of the respective orthogonal complements. The diagonal submatrices consisting of the spectrum, $\Sigmab_{A,k} \triangleq \diag\rbr{\sigma_{A,1}, \dots, \sigma_{A,k}}$ and $\oc{\Sigmab_{A,k}} \triangleq \diag\rbr{\sigma_{A,k+1}, \dots, \sigma_{A,r}}$, follow analogously.
We denote $\Ab_k \triangleq \Ub_{A,k} \Sigmab_{A,k} \Vb_{A,k}^{\top}$ as the rank-$k$ truncated SVD that minimizes rank-$k$ approximation error of $\Ab$ (\cite{eckart1936}).
Furthermore, we denote the spectrum of $\Ab$, $\sigma\rbr{\Ab}$, as a $r \times r$ diagonal matrix, while for each $i=1,\dots,r$, let $\sigma_i(\Ab)$ be the $i$-th singular value of $\Ab$.

For the QR factorization, given an arbitrary rectangular matrices $\Mb \in \R^{d \times l}$ with full column rank ($d \geq l$), let $\Mb = \sbr{\Qb_{M},\oc{\Qb_{M}}} \sbr{\Rb_{M};\b{0}}$ be a full QR factorization of $\Mb$ such that $\Qb_{M} \in \R^{d \times l}$ and $\oc{\Qb_{M}} \in \R^{d \times (d-l)}$ consist of orthonormal bases of the subspace spanned by the columns of $\Mb$ and its orthogonal complement. We denote $\ortho: \csepp{\Mb \in\R^{d \times l}}{\rank\rbr{\Mb} = l} \to \R^{d \times l}$ ($d \geq l$) as a map that identifies an orthonormal basis (not necessary unique) for $\Mb$, $\ortho(\Mb) = \Qb_M$.

We adapt the MATLAB notation for matrices throughout this work. Unless specified otherwise ($\eg$, with subscripts), we use $\nbr{\cdot}$ to represent either the spectral norm or the Frobenius norm ($\ie$, holding simultaneously for both norms).

\subsection{Interpolative and CUR decompositions} \label{subsec:cur}
We first recall the definitions of the interpolative and CUR decompositions of a given $m\times n$ real matrix $\Ab$.
After providing the basic definitions, we discuss first how well it is theoretically possible to do
low-rank approximation under the constraint that natural bases must be used.
We then briefly describe the further suboptimality incurred by standard algorithms.

\subsubsection{Basic definitions}
We consider low-rank approximations for $\Ab$ with column and/or row subsets as bases. Given an arbitrary linearly independent column subset $\Cb = \Ab\rbr{:,J_s}$ ($J_s \subset [n]$), the rank-$\abbr{J_s}$ column ID of $\Ab$ with respect to column skeletons $J_s$ can be formulated as,
\begin{align}\label{eq:def_column_id}
    \cid{\Ab}{J_s} \triangleq \Cb \Cb ^{\pinv} \Ab,
\end{align}
where $\Cb \Cb ^{\pinv}$ is the orthogonal projector onto the spanning subspace of column skeletons.
Analogously, given any linearly independent row subset $\Rb = \Ab\rbr{I_s,:}$ ($I_s \subset [m]$), the rank-$\abbr{I_s}$ column ID of $\Ab$ with respect to row skeletons $I_s$ takes the form
\begin{align}\label{eq:def_row_id}
    \rid{\Ab}{I_s} \triangleq \Ab \Rb^{\pinv} \Rb,
\end{align}
where $\Rb^{\pinv} \Rb$ is the orthogonal projector onto the span of row skeletons.
While with both column and row skeletons, we can construct low-rank approximations for $\Ab$ in two forms -- the two-sided ID and CUR decomposition: with $\abbr{I_s}=\abbr{J_s}$, let $\Sb \triangleq \Ab\rbr{I_s,J_s}$ be an invertible two-sided skeleton of $\Ab$ such that
\begin{align}
    \label{eq:def_two_sided_id}
    &\t{Two-sided ID:}
    &&\tsid{\Ab}{I_s,J_s} \triangleq  \rbr{\Cb \Sb^{-1}} \Sb \rbr{\Cb^{\pinv} \Ab}
    \\
    \label{eq:def_cur}
    &\t{CUR decomposition:}
    &&\cur{\Ab}{I_s,J_s} \triangleq  \Cb \rbr{\Cb^\pinv \Ab \Rb^\pinv} \Rb
\end{align}
where in the exact arithmetic, since $\Sb^{-1} \Sb = \Ib$, the two sided ID is equivalent to the column ID characterized by $\Cb$, \ie, $\tsid{\Ab}{I_s,J_s} = \cid{\Ab}{J_s}$. Nevertheless, the two-sided ID $\tsid{\Ab}{I_s,J_s}$ and CUR decomposition $\cur{\Ab}{I_s,J_s}$ differ in both suboptimality and conditioning.

\begin{remark}[Suboptimality of ID versus CUR]\label{remark:id_cur_suboptimality}
For any given column and row skeletons $\Cb$ and $\Rb$, 
\begin{align}\label{eq:id_cur_suboptimality}
    \nbr{\Ab - \Cb\Cb^\pinv\Ab} \le \nbr{\Ab - \Cb\Cb^\pinv\Ab\Rb^\pinv\Rb} \le \rbr{\nbr{\Ab - \Cb\Cb^\pinv\Ab}^2 + \nbr{\Ab - \Ab\Rb^\pinv\Rb}^2}^{\frac{1}{2}}.
\end{align}
This comes from a simple orthogonal decomposition 
\begin{align*}
    \Ab - \Cb\Cb^\pinv\Ab\Rb^\pinv\Rb = \rbr{\Ib_m - \Cb\Cb^\pinv}\Ab + \Cb\Cb^\pinv\rbr{\Ab - \Ab\Rb^\pinv\Rb}
\end{align*}
where $\rbr{\Ib_m - \Cb\Cb^\pinv}$ and $\Cb\Cb^\pinv$ are orthogonal projectors. Therefore with the Frobenius norm,
\begin{align*}
    \nbr{\Ab - \Cb\Cb^\pinv\Ab\Rb^\pinv\Rb}_F^2 
    = &\nbr{\Ab - \Cb\Cb^\pinv\Ab}_F^2 + \nbr{\Cb\Cb^\pinv\rbr{\Ab - \Ab\Rb^\pinv\Rb}}_F^2 
    \\
    \le &\nbr{\Ab - \Cb\Cb^\pinv\Ab}_F^2 + \nbr{\Ab - \Ab\Rb^\pinv\Rb}_F^2,
\end{align*}    
while with the spectral norm
\begin{align*}
    &\nbr{\Ab - \Cb\Cb^\pinv\Ab\Rb^\pinv\Rb}_2^2  
    = \max_{\nbr{\vb}_2 \le 1} \nbr{\rbr{\Ab - \Cb\Cb^\pinv\Ab}\vb}_2^2 + \nbr{\Cb\Cb^\pinv\rbr{\Ab - \Ab\Rb^\pinv\Rb} \vb}_2^2
    \\
    &\begin{cases}
    \ge \max_{\nbr{\vb}_2 \le 1} \nbr{\rbr{\Ab - \Cb\Cb^\pinv\Ab}\vb}_2^2 = \nbr{\Ab - \Cb\Cb^\pinv\Ab}_2^2
    \\
    \le \max_{\nbr{\vb}_2 \le 1} \nbr{\rbr{\Ab - \Cb\Cb^\pinv\Ab}\vb}_2^2 + \nbr{\Cb\Cb^\pinv\rbr{\Ab - \Ab\Rb^\pinv\Rb} \vb}_2^2 
    \le \nbr{\Ab - \Cb\Cb^\pinv\Ab}_2^2 + \nbr{\Ab - \Ab\Rb^\pinv\Rb}_2^2
    \end{cases}.
\end{align*}    
\end{remark}

\begin{remark}[Conditioning of ID versus CUR]\label{remark:id_cur_stability}
The construction of CUR decomposition tends to be more ill-conditioned than that of two-sided ID. 
Precisely, for properly selected column and row skeletons $J_s$ and $I_s$, the corresponding skeletons $\Sb$, $\Cb$, and $\Rb$ shares similar spectrum decay as $\Ab$, which is usually ill-conditioned in the context.
In the CUR decomposition, both the bases $\Cb$, $\Rb$ and the small matrix $\Cb^\pinv \Ab \Rb^\pinv$ in the middle tend to suffer from large condition numbers as that of $\Ab$. 
In contrast, the only potentially ill-conditioned component in the two-sided ID is $\Sb$ ($\ie$, despite being expressed in $\Sb^{-1}$ and $\Cb^\pinv$, $\rbr{\Cb \Sb^{-1}}$ and $\rbr{\Cb^\pinv \Ab}$ in \Cref{eq:def_two_sided_id} are actually well-conditioned, and can be evaluated without direct inversions). 
\end{remark}

\begin{remark}[Stable CUR]\label{remark:stable_cur}
Numerically, the stable construction of a CUR decomposition $\cur{\Ab}{I_s,J_s}$ can be conducted via (unpivoted) QR factorization of $\Cb$ and $\Rb$ (\cite{anderson2015spectral}, Algorithm 2): let $\Qb_C \in \R^{m \times \abbr{J_s}}$ and $\Qb_R \in \R^{n \times \abbr{I_s}}$ be matrices from the QR whose columns form orthonormal bases for $\Cb$ and $\Rb^{\top}$, respectively, then
\begin{align}\label{eq:cur_stable}
    \cur{\Ab}{I_s,J_s} = \Qb_C \rbr{\Qb_C^{\top} \Ab \Qb_R} \Qb_R^{\top}.
\end{align}
\end{remark}    



\subsubsection{Notion of suboptimality}
\label{sec:cssp}

Both interpolative and CUR decompositions share the common goal of identifying proper column and/or row skeletons for $\Ab$ whose column and/or row spaces are well covered by the respective spans of these skeletons.
Without loss of generality, we consider the column skeleton selection problem: for a given rank $k < r$, we aims to find a proper column subset, $\Cb = \Ab\rbr{:,J_s}$ ($J_s \subset [n]$, $\abbr{J_s} = k$), such that
\begin{align}\label{eq:def_cssp_goal_close_to_opt_err}
    \nbr{\Ab - \cid{\Ab}{J_s}} \leq \phi\rbr{k,m,n} \nbr{\Ab - \Ab_k}
\end{align}
where common choices of the norm $\nbr{\cdot}$ include the spectral norm $\nbr{\cdot}_2$ and Frobenius norm $\nbr{\cdot}_F$; $\phi\rbr{k,m,n}$ is a function with $\phi\rbr{k,m,n} \geq 1$ for all $k,m,n$, and depends on the choice of $\nbr{\cdot}$; and we recall that $\Ab_k \triangleq \Ub_{A,k} \Sigmab_{A,k} \Vb_{A,k}^{\top}$ yields the optimal rank-$k$ approximation error. Meanwhile, similar low-rank approximation error bounds are desired for the row ID $\rid{\Ab}{I_s}$, two-sided ID $\tsid{\Ab}{I_s,J_s}$, and CUR decomposition $\cur{\Ab}{I_s,J_s}$.

\subsubsection{Suboptimality of matrix skeletonization algorithms}
The suboptimality of column subset selection, as well as the corresponding ID and CUR decomposition, has been widely studied in a variety of literature.
Specifically, \cite{goreinov1997} proved that with $\Sb = \Ab(I_s,J_s)$ ($\abbr{I_s}=\abbr{J_s}=k$) being the the maximal-volume submatrix in $\Ab$, the corresponding CUR decomposition (called pseudoskeleton component in the original paper) satisfies \Cref{eq:def_cssp_goal_close_to_opt_err} in $\nbr{\cdot}_2$ with $\phi = O\rbr{\sqrt{k}\rbr{\sqrt{m}+\sqrt{n}}}$. However, \cite{goreinov1997} pointed out that skeletons associated with the maximal-volume submatrix are not guaranteed to minimize the low-rank approximation error in \Cref{eq:def_cssp_goal_close_to_opt_err}.
Moreover, from the algorithmic perspective, identification of the maximal-volume submatrix is known to be NP-hard (\cite{civril2009}).
As the derivation of analysis for the strong rank-revealing QR factorization, \cite{gu1996} demonstrated the existence of a rank-$k$ column ID with $\phi = \sqrt{1+k(n-k)}$, and proposed a polynomial algorithm for constructing a relaxation with $\phi = O\rbr{ \sqrt{k(n-k)}}$.
Leveraging sampling based strategies, \cite{deshpande2006} showed the existence of a rank-$k$ column ID with $\phi=\sqrt{(k+1)(n-k)}$ for $\nbr{\cdot}_2$ and $\phi=\sqrt{1+k}$ for $\nbr{\cdot}_F$ by upper bounding the expectation of $\nbr{\Ab - \cid{\Ab}{J_s}}$ for volume sampling. Later on, \cite{deshpande2010, cortinovis2019lowrank} proposed polynomial-time algorithms for selecting such column skeletons.
The recent work \cite{derezinski2020improved} unveiled the multiple-descent trend of the suboptimality factor $\phi$ with respect to the approximation rank $k$, and illustrated that, depending on the spectrum decay, the suboptimality factor can be as tight as $\phi = O\rbr{k^{1/4}}$ for small $k$s, while for larger $k$s that fall in certain intervals, $\phi = \Omega(\sqrt{k})$.

\subsection{Randomized linear embeddings}\label{subsec:embedding}

For a given matrix $\Ab_k \in \R^{m \times n}$ of rank $k \leq \min\rbr{m,n}$ (typically we consider $k \ll \min(m,n)$), and a distortion parameter $\epsilon \in (0,1)$, a linear map $\Gammab: \R^m \to \R^l$ ($\ie$, $\Gammab \in \R^{l \times m}$, typically we consider $l \ll m$ for embeddings) is called an \textit{$\ell_2$ linear embedding} of $\Ab_k$ with distortion $\epsilon$ if
\begin{align}\label{eq:def_l2_rand_linear_embedding}
    (1 - \epsilon) \norm{\Ab_k\xb}_2 \leq \norm{\Gammab\Ab_k\xb}_2 \leq (1 + \epsilon) \norm{\Ab_k\xb}_2 \quad \forall\ \xb \in \R^n.
\end{align}
A distribution $\Scal$ over linear maps $\R^m \to \R^l$ (or equivalently, over $\R^{l \times m}$) generates \textit{randomized oblivious $\ell_2$ linear embeddings} (abbreviated as \textit{randomized linear embeddings}) if over $\Gammab \sim \Scal$, \Cref{eq:def_l2_rand_linear_embedding} holds for all $\Ab_k$ with at least constant probability. Given $\Ab_k$ and a randomized linear embedding $\Gammab \sim \Scal$, $\Gammab\Ab_k$ provides a (row) sketch of $\Ab_k$, and the process of forming $\Gammab\Ab_k$ is known as sketching (\cite{woodruff2015, martinsson2020}).

Randomized linear embeddings are closely related to various concepts like the Johnson-Lindenstrauss lemma and the restricted isometry property, and are studied in a broad scope of literature. \cite{martinsson2020} (Section 8, 9) provided an overview for and instantiated some popular choices of randomized linear embeddings, including
\begin{enumerate}
    \item Gaussian embedddings: $\Gammab \in \R^{l \times m}$ consisting of $\iid$ Gaussian entries $S_{ij} \sim \Ncal(\mu, 1/l)$ (\cite{indyk1998, halko2011, woodruff2015, martinsson2020});
    \item subsampled randomized trigonometric transforms (SRTT): $\Gammab = \sqrt{\frac{m}{l}} \Pib_{m \to l} \Tb \Phib \Pib_{m \to m}$ where
    $\Pib_{m \to l} \in \R^{l \times m}$ is a uniformly random selection of $l$ out of $m$ rows;
    $\Tb$ is an $m \times m$ unitary trigonometric transform ($\eg$, discrete Hartley transform for $\R$, and discrete Fourier transform for $\C$);
    $\Phib \triangleq \diag\rbr{\varphi_1,\dots,\varphi_m}$ with $\iid$ Rademacher random variables $\cbr{\varphi_i}_{i \in [m]}$ flips signs randomly; and
    $\Pib_{m \to m}$ is a random permutation (\cite{woolfe2008, halko2011, rokhlin2008, tropp2011, boutsidis2013srtt, martinsson2020}); and
    \item sparse sign matrices: $\Gammab = \sqrt{\frac{m}{\zeta}} \sbr{\sbb_1,\dots,\sbb_m}$ for some $2 \leq \zeta \leq l$, with $\iid$ $\zeta$-sparse columns $\cbr{\sbb_j \in \R^l}_{j \in [m]}$ constructed such that each $\sbb_j$ is filled with $\zeta$ independent Rademacher random variables at uniformly random coordinates (\cite{meng2013, nelson2013, woodruff2015, clarkson2017, tropp2017a, martinsson2020}).
\end{enumerate}
\Cref{tab:embedding_summary} summarizes lower bounds on $l$s that provide theoretical guarantee for \Cref{eq:def_l2_rand_linear_embedding}, along with asymptotic complexities of sketching, denoted as $T_s(l,\Ab_k)$, for these randomized linear embeddings.
In spite of the weaker guarantees for structured randomized embeddings ($\ie$, SRTTs and sparse sign matrices) in the theory by a logarithmic factor, from the empirical perpective, $l = \Omega\rbr{k/\epsilon^2}$ is usually sufficient for all the embeddings in \Cref{tab:embedding_summary} when considering tasks such as constructing randomized rangefinders (which we subsequently leverage for fast skeleton selection). For instance, \cite{halko2011, martinsson2020} suggested taking $l=k+\Omega(1)$ ($\eg$, $l=k+10$) for Gaussian embeddings, $l=\Omega\rbr{k}$ for SRTTs, and $l=\Omega\rbr{k}$, $\zeta = \min\rbr{l,8}$ (\cite{tropp2019streaming}) for sparse sign matrices in practice.

\begin{table}
    \centering
    \begin{tabular}{c|c|c}
    \hline
    Randomized linear embedding & Theoretical best dimension reduction & $T_s(l,\Ab_k)$ \\
    \hline
    Gaussian embedding & $l = \Omega\rbr{k/\epsilon^2}$ & $O(\nnz(\Ab_k) l)$ \\
    SRTT & $l = \Omega\rbr{k \log k / \epsilon^2}$ & $O(mn \log l)$ \\
    Sparse sign matrix & $l = \Omega\rbr{k \log k / \epsilon^2}$, $\zeta = \Omega\rbr{\log k / \epsilon}$ & $O(\nnz(\Ab_k) \zeta)$\\
    \hline
    \end{tabular}
    \caption{Lower bounds of $l$s that provide theoretical guarantee for \Cref{eq:def_l2_rand_linear_embedding}, and asymptotic complexities of sketching, $T_s(l,\Ab_k)$, for some common randomized linear embeddings.}
    \label{tab:embedding_summary}
\end{table}

\subsection{Randomized rangefinder and low-rank SVD}\label{subsec:rsvd}
Given $\Ab \in \R^{m \times n}$, the randomized rangefinder problem aims to construct a matrix $\Xb \in \R^{l \times n}$ such that the row space of $\Xb$ aligns well with the leading right singular subspace of $\Ab$ (\cite{martinsson2020}): $\nbr{\Ab - \Ab \Xb^{\pinv} \Xb}$ is sufficiently small for some unitary invariant norm $\nbr{\cdot}$ ($\eg$, $\nbr{\cdot}_2$ or $\nbr{\cdot}_F$). 
When $\Xb$ admits full row rank, we call $\Xb$ a rank-$l$ row space approximator of $\Ab$. The well-known optimality result from \cite{eckart1936} demonstrated that, for a fixed rank $k$, the optimal rank-$k$ row space approximator of $\Ab$ is given by its leading $k$ right singular vectors: $\nbr{\Ab - \Ab \Vb_{A,k} \Vb_{A,k}^{\top}}_F^2 = \nbr{\Ab - \Ab_k}_F^2 = \sum_{i=k+1}^{\min\cbr{m,n}} \sigma_i\rbr{\Ab}^2$.

A row sketch $\Xb = \Gammab \Ab$ generated by some proper randomized linear embedding $\Gammab$ is known to serve as a good solution for the randomized rangefinder problem with high probability. For instance, \cite{halko2011} demonstrated that, with the Gaussian embedding, a small constant oversampling $l-k \geq 4$ is sufficient for a good approximation:
\begin{align}\label{eq:rand_rangefinder_error_bound}
    \E \sbr{\nbr{\Ab - \Ab \Xb^{\pinv} \Xb}_F^2} \leq \frac{l-1}{l-k-1} \nbr{\Ab - \Ab_k}_F^2,
\end{align}
and moreover, $\nbr{\Ab - \Ab \Xb^{\pinv} \Xb}_F^2 \lesssim l (l-k) \log\rbr{l-k} \nbr{\Ab - \Ab_k}_F^2$ with high probability.
Similar guarantees hold for spectral norm (\cite{halko2011}, Section 10).
The randomized rangefinder error depends on the spectral decay of $\Ab$, and can be aggravated by a flat spectrum. In this scenario, power iterations (with proper orthogonalization, \cite{halko2011}), as well as Krylov and block Krylov subspace iterations (\cite{musco2015randomized}), may be incorporated after the initial sketching as a remedy. For example, with a randomized linear embedding $\Omegab$ of size $l \times n$, a row space approximator with $q$ power iterations ($q \geq 1$) is given by
\begin{align}\label{eq:def_power_iter}
    \Xb = \Omegab \rbr{\Ab^{\top} \Ab}^{q}
,\end{align}
and takes $\Xb$ takes $O\rbr{T_s(l,\Ab) + (2q-1) \nnz(\Ab) l}$ operations to construct. However, such plain power iteration in \Cref{eq:def_power_iter} is numerically unstable, and can lead to large errors when $\Ab$ is ill-conditioned and $q > 1$. For a stable construction, orthogonalization can be applied at each iteration:
\begin{align}\label{eq:ortho_power_iter}
    & \Yb^{(1)} = \Ab \Omegab^{\top} \nonumber
    \\
    & \Yb^{(i)} = \ortho\rbr{\Ab \ortho\rbr{\Ab^{\top} \Yb^{(i-1)}}} \ \forall\ i = 2,\dots,q\ (\t{if}\ q > 1) \nonumber
    \\
    & \Xb = \ortho\rbr{\Yb^{(q)}}^{\top} \Ab
\end{align}
with an additional cost of $O\rbr{q(m+n)l^2}$ overall.

In addition, with a proper $l$ that does not exceed the exact rank of $\Ab$, the row sketch $\Xb \in \R^{l \times n}$ has full row rank almost surely.
Precisely, recall $r=\rank(\Ab) \le \min\cbr{m,n}$ from \Cref{subsec:notation}:
\begin{lemma}\label{lemma:sketch_for_rand_rangefinder}
Let $\Gammab \in \R^{l \times m}$ be a randomized linear embedding with $\iid$ entries drawn from some absolutely continuous distribution (with respect to the Lebesgue measure). Then with $l \leq r$, the row sketch $\Xb = \Gammab \Ab$ has full row rank with probability $1$.
\end{lemma}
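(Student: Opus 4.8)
<br>

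The plan is to write $\Ab = \Ub_A \Sigmab_A \Vb_A^\top$ using the SVD from \Cref{subsec:notation}, so that $\Xb = \Gammab \Ab = (\Gammab \Ub_A) \Sigmab_A \Vb_A^\top$. Since $\Sigmab_A \in \R^{r \times r}$ is invertible (it holds the nonzero singular values) and $\Vb_A \in \R^{n \times r}$ has orthonormal columns, the rank of $\Xb$ equals the rank of the $l \times r$ matrix $\Gammab \Ub_A$. Because $l \le r$, it suffices to show that $\Gammab \Ub_A$ has full row rank $l$ almost surely, equivalently that $\det\big((\Gammab \Ub_A)(\Gammab \Ub_A)^\top\big) \ne 0$, or more simply that some $l \times l$ submatrix (set of $l$ columns) of $\Gammab \Ub_A$ is nonsingular.

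The key step is an algebraic-geometry / measure-theoretic argument: the map $\Gammab \mapsto \det\big((\Gammab \Ub_A)(:, \mathcal{J})\big)$, for any fixed choice of $l$ columns $\mathcal{J} \subset [r]$, is a polynomial in the $lm$ entries of $\Gammab$. A polynomial that is not identically zero vanishes only on a set of Lebesgue measure zero; since the entries of $\Gammab$ are i.i.d.\ from a distribution absolutely continuous with respect to Lebesgue measure, the event that this determinant is zero has probability zero. Hence it is enough to exhibit \emph{one} value of $\Gammab$ (not necessarily in the support of the distribution) for which the determinant is nonzero — this certifies the polynomial is not the zero polynomial. For a suitable $\mathcal{J}$ this is easy: pick $\Gammab$ so that $\Gammab \Ub_A$ equals, say, the first $l$ rows of $\Ib_r$ padded appropriately; concretely, since $\Ub_A$ has full column rank $r \ge l$, one can choose $\Gammab = \Eb \Ub_A^\pinv$ where $\Eb \in \R^{l \times r}$ selects $l$ coordinates, giving $\Gammab \Ub_A = \Eb$, which plainly has an $l \times l$ identity submatrix. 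Taking a union bound over the finitely many subsets $\mathcal{J}$ (or just using the single good one) gives the claim.

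The main obstacle — really the only subtlety — is handling the "i.i.d.\ entries from an absolutely continuous distribution" hypothesis correctly: absolute continuity of each coordinate's law with respect to one-dimensional Lebesgue measure implies, by Fubini/Tonelli, that the joint law on $\R^{lm}$ is absolutely continuous with respect to $lm$-dimensional Lebesgue measure, so that measure-zero sets (like the zero set of a nontrivial polynomial) are null under the joint law. I would state this reduction carefully. Everything else — the rank factorization through $\Sigmab_A$ and $\Vb_A$, the polynomial nonvanishing certificate — is routine. I would also note that the hypothesis $l \le r$ is essential: if $l > r$ then $\Xb$ has more rows than its rank $\le r$ permits, so full row rank is impossible, which is why the lemma restricts to $l \le r$.
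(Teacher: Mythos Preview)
Your proof is correct and takes a somewhat different route from the paper's. The paper argues in two steps: first it shows that every $l\times l$ submatrix of $\Gammab$ itself is invertible almost surely (via the determinant-is-a-polynomial argument applied directly to the entries of $\Gammab$), and second it uses a geometric observation---that proper subspaces of $\R^m$ have Lebesgue measure zero---to conclude that projecting the $l$-dimensional row space of $\Gammab$ onto the range of $\Ab$ does not drop dimension. Your approach instead factors through the SVD to reduce the question to the rank of the $l\times r$ matrix $\Gammab\Ub_A$, and then applies the polynomial-nonvanishing argument to $\det\big((\Gammab\Ub_A)(:,\Jcal)\big)$ viewed as a polynomial in the entries of $\Gammab$, with an explicit certificate $\Gammab=\Eb\Ub_A^\top$ showing the polynomial is not identically zero. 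Your route is more algebraic and arguably tighter: it avoids the slightly informal projection step, supplies an explicit nonvanishing witness, and makes the Fubini reduction (from coordinate-wise to joint absolute continuity) explicit, which the paper leaves implicit. The paper's route, on the other hand, is more geometric in flavor and does not need to invoke the SVD of $\Ab$.
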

\begin{proof}[Proof of \Cref{lemma:sketch_for_rand_rangefinder}]
We first claim that all $l \times l$ submatrices in $\Gammab$ are invertible with probability $1$. It is sufficient to show that all $l \times l$ submatrices have nonzero determinants almost surely. The determinant of an $l \times l$ matrix can be expressed as a polynomial in $l^2$ entries of $\Gammab$, where zero set of the polynomial has Lebesgue measure $0$ in $\R^{l^2}$. Then since the distribution of entries in $\Gammab$ is absolutely continuous, the determinant is nonzero with probability $1$. Second, for $\Xb = \Gammab \Ab$ admitting full row rank, we observe that rows in $\Gammab$ can be interpreted as independent random vectors in $\R^m$ with $\iid$ entries from some absolutely continuous distribution. We aim to show that projection of the dimension-$l$ row space of $\Gammab$ onto the range of $\Ab$ remains dimension $l$ almost surely. But since any proper subspaces of $\R^m$ have Lebesgue measure $0$ in $\R^m$, projection of the row space of $\Gammab$ onto the range of $\Ab$ is dimension $l$, and therefore $\Xb$ admits full row rank, almost surely.
\end{proof}

A low-rank row space approximator $\Xb$ can be subsequently leveraged to construct a randomized rank-$l$ SVD.
Assuming $l$ is properly chosen such that $\Xb$ has full row rank, let $\Qb_X \in \R^{n \times l}$ be an orthonormal basis for the row space of $\Xb$.
\cite{halko2011} pointed out that the exact SVD of the smaller matrix $\Ab \Qb_X \in \R^{m \times l}$:
\begin{align}\label{eq:rsvd_procedures}
    \sbr{\underset{m \times l}{\wh \Ub_A}, \underset{l \times l}{\wh \Sigmab_A}, \underset{l \times l}{\wt \Vb_A}} = \t{svd} \rbr{\underbrace{\Ab \Qb_X}_{m \times l}, \t{`econ'}},
    \quad
    \underset{n \times l}{\wh\Vb_A} = \Qb_X \wt\Vb_A,
\end{align}
can be evaluated efficiently in $O\rbr{ml^2}$ operations (and $O\rbr{nl^2}$ additional operations for constructing $\wh\Vb_A$) such that $\Ab \approx \Ab \Xb^{\pinv} \Xb = \wh\Ub_A \wh\Sigmab_A \wh\Vb_A^{\top}$.

\subsection{Matrix decompositions with pivoting}\label{subsec:pivoting_mat_decomp}
We next briefly survey how pivoted QR and LU decompositions can be leveraged to
resolve the matrix skeleton selection problem.
In this section, $\Xb \in \R^{l \times n}$ denotes a matrix of full row rank
(that will typically arise as a ``row space approximator'').
Let $\Xb^{(t)} \in \R^{l \times n}$ be the resulted matrix after the $t$-th step of pivoting and matrix updating, so that $\Xb^{(0)} = \Xb$.

\subsubsection{Column pivoted QR (\pqr)}
Applying the \pqr to $\Xb$ gives:
\begin{align}\label{eq:def_QRCP}
    \Xb\ \underset{n \times n}{\Pib_n} = \Xb\ \underset{n \times l}{\left[\Pib_{n,1}\right.},\ \underset{n \times (n-l)}{\left.\Pib_{n,2}\right]} =  \underset{l \times l}{\Qb_l}\ \underset{l \times n}{\Rb^{QR}} = \Qb_l\ \underset{l \times l}{\left[\Rb^{QR}_{1}\right.},\ \underset{l \times (n-l)}{\left.\Rb^{QR}_{2}\right]},
\end{align}
where $\Qb_l$ is an orthogonal matrix; $\Rb^{QR}_{1}$ is upper triangular; and $\Pib_n \in \R^{n \times n}$ is a column permutation.
QR decompositions rank-$1$ update the active submatrix at each step for orthogonalization ($\eg$, \cite{householder1958hhqr}, \cite{trefethen1997}, Algorithm 10.1).
For each $t=0,\dots,l-2$, at the $(t+1)$-th step, the \pqr searches the entire active submatrix $\Xb^{(t)}\rbr{t+1:l, t+1:n}$ for the $(t+1)$-th column pivot with the maximal $\ell_2$-norm:
\begin{align*} 
    j_{t+1} = \argmax_{t+1 \leq j \leq n} \nbr{\Xb^{(t)}\rbr{t+1:l, j}}_2.
\end{align*}
\cite{gu1996} illustrated that \pqr satisfies $\max_{i,j}\abbr{\rbr{\rbr{\Rb^{QR}_{1}}^{-1} \Rb^{QR}_{2}}_{ij}} \leq 2^{l-i}$, and instantiated the classical Kahan matrix (\cite{kahan1966}) as a pathological case that admits exponential growth. Nevertheless, these adversarial inputs are scarce and sensitive to perturbations. The empirical success of \pqr also suggests that exponential growth with respect to $l$ almost never occurs in practice (\cite{trefethen1990}).
Meanwhile, there exist more sophisticated variations of \pqr, like the rank-revealing (\cite{chan1987, hong1992}) and strong rank-revealing QR (\cite{gu1996}), guaranteeing that $\max_{i,j} \abbr{\rbr{\rbr{\Rb^{QR}_{1}}^{-1} \Rb^{QR}_{2}}_{ij}}$ is upper bounded by some low-degree polynomial in $l$, but coming with higher complexities as trade-off.

\subsubsection{LU with partial pivoting (\plu)}
Applying the \plu columnwise to $\Xb$ yields:
\begin{align}\label{eq:def_LUPP}
    \Xb\ \underset{n \times n}{\Pib_n} = \Xb\ \underset{n \times l}{\left[\Pib_{n,1}\right.},\ \underset{n \times (n-l)}{\left.\Pib_{n,2}\right]} =  \underset{l \times l}{\Lb_l}\ \underset{l \times n}{\Rb^{LU}} = \Lb_l\ \underset{l \times l}{\left[\Rb^{LU}_{1}\right.},\ \underset{l \times (n-l)}{\left.\Rb^{LU}_{2}\right]},
\end{align}
where $\Lb_l$ is lower triangular; $\Rb^{LU}_{1}$ is upper triangular; $\Rb_1^{LU}\rbr{i,i} = 1$ and $\abbr{\Rb^{LU}\rbr{i,j}} \leq 1$ for all $i \in [l]$, $i \leq j \leq n$; and $\Pib_n \in \R^{n \times n}$ is a column permutation.
LU decompositions update active submatrices via Shur complements ($\eg$, \cite{trefethen1997}, Algorithm 21.1): for $t=0,\dots,l-2$,
\begin{align*}
    \Xb^{(t+1)}\rbr{t+2:l,t+2:n} = \Xb^{(t)}\rbr{t+2:l,t+2:n} - \Xb^{(t)}\rbr{t+2:l,t} \Xb^{(t)}\rbr{t,t+2:n}/\Xb^{(t)}\rbr{t,t}.
\end{align*}
At the $(t+1)$-th step, the (column-wise) \plu searches only the $(t+1)$-th row in the active submatrix, $\Xb^{(t)}\rbr{t+1:l, t+1:n}$, and pivots
\begin{align*}
    j_{t+1} = \argmax_{t+1 \leq j \leq n} \abbr{\Xb^{(t)}\rbr{t+1, j}},
\end{align*}
such that $\Rb^{LU}\rbr{i,j} = \Xb^{(i-1)}\rbr{i, j} / \Xb^{(i)}\rbr{i, i}$ for all $i \in [l]$, $i+1 \leq j \leq n$ (except for $\Rb^{LU}\rbr{i,j_i} = \Xb^{(i-1)}\rbr{i, i} / \Xb^{(i)}\rbr{i, i}$), and therefore $\abbr{\Rb^{LU}\rbr{i,j}} \leq 1$.

Analogous to \pqr, the pivoting strategy of \plu leads to a loose, exponential upper bound:
\begin{lemma}\label{lemma:LUPP_entry_bound}
The column-wise \plu in \Cref{eq:def_LUPP} satisfies that
\begin{align*}
    \max_{i,j} \abbr{\rbr{\rbr{\Rb^{LU}_{1}}^{-1} \Rb^{LU}_{2}}_{ij}} \leq 2^{l-i},
\end{align*}
where the upper bound is tight, for instance, when $\Rb^{LU}_{1}\rbr{i,j}=-1$ for all $i \in [l-1]$, $i+1 \leq j \leq l$ and $\Rb^{LU}_{2}\rbr{i,j} = 1$ for all $i \in [l]$, $j \in [n-l]$ ($\ie$, a Kahan-type matrix (\cite{kahan1966, peters1975})).
\end{lemma}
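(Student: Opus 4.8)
The plan is to reduce the claimed bound to a one–dimensional back-substitution estimate, mirroring the classical growth-factor argument for \pqr. From the description of the column-wise \plu given just before the statement, the block $\Rb^{LU}_1 \in \R^{l\times l}$ is upper triangular with $\Rb^{LU}_1(i,i) = 1$ and $\abbr{\Rb^{LU}_1(i,j)} \le 1$ for $i < j \le l$, while every entry of $\Rb^{LU}_2 \in \R^{l \times (n-l)}$ has magnitude at most $1$ (each such entry is an $\Rb^{LU}(i,j)$ with $j > l \ge i$). Write $\Wb \triangleq \rbr{\Rb^{LU}_1}^{-1} \Rb^{LU}_2$, so that $\Rb^{LU}_1 \Wb = \Rb^{LU}_2$. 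Fix a column index $c$, and set $\wb \triangleq \Wb(:,c)$ and $\bb \triangleq \Rb^{LU}_2(:,c)$; since $\Rb^{LU}_1$ is upper triangular with unit diagonal, the system $\Rb^{LU}_1 \wb = \bb$ is solved by the back-substitution recursion $w_l = b_l$ and $w_i = b_i - \sum_{j=i+1}^{l} \Rb^{LU}_1(i,j)\, w_j$ for $i = l-1,\dots,1$.

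Next I would prove $\abbr{w_i} \le 2^{l-i}$ by reverse induction on $i$. The base case $i=l$ is $\abbr{w_l} = \abbr{b_l} \le 1 = 2^{0}$. For the inductive step, assume $\abbr{w_j} \le 2^{l-j}$ for all $j > i$; then the triangle inequality together with $\abbr{\Rb^{LU}_1(i,j)} \le 1$ and $\abbr{b_i}\le 1$ yields $\abbr{w_i} \le 1 + \sum_{j=i+1}^{l} 2^{l-j} = 1 + \rbr{2^{l-i}-1} = 2^{l-i}$, where I used the geometric sum $\sum_{j=i+1}^{l} 2^{l-j} = 2^0 + 2^1 + \cdots + 2^{l-i-1} = 2^{l-i}-1$. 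Since $c$ was arbitrary, this gives $\abbr{\rbr{\rbr{\Rb^{LU}_1}^{-1}\Rb^{LU}_2}_{ij}} \le 2^{l-i}$ for every $i \in [l]$ and every $j$, which is exactly the asserted bound.

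For tightness I would exhibit an explicit input realizing equality. Take $\Xb \in \R^{l \times n}$ to be the (already triangular) Kahan-type matrix with $\Xb(i,i)=1$, $\Xb(i,j)=-1$ for $1 \le i < j \le l$, $\Xb(i,j) = 1$ for $j > l$, and all other entries zero. With the standard convention that the pivot search returns the least maximizing index, at step $t+1$ the active submatrix equals $\Xb(t+1:l,t+1:n)$ (no prior Schur update changed anything, as column $t$ of an upper triangular matrix is zero below row $t$), and its first row has a maximum-magnitude entry ($=1$) in its first position, so \plu swaps nothing. Hence $\Lb_l = \Ib_l$ and $\Rb^{LU} = \Xb$, giving precisely $\Rb^{LU}_1(i,j)=-1$ for $i<j$ and $\Rb^{LU}_2 \equiv 1$. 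Running the back-substitution with $\bb = \mathbf{1}$ then yields $w_l = 1$ and $w_i = 1 + \sum_{j=i+1}^{l} w_j = 2^{l-i}$ by the same induction, so the bound $2^{l-i}$ is attained in row $i$.

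I do not expect a genuine obstacle here: the first half is a routine back-substitution plus geometric-series bound, identical in spirit to the \pqr growth estimate already cited from \cite{gu1996}. The only point that deserves a little care is the tightness half — one must verify that \plu applied to the proposed $\Xb$ produces the stated $\Rb^{LU}$ with no column permutation, which hinges on the tie-breaking rule and on $\Xb$ being upper triangular so that every Schur complement update vanishes. This is elementary but worth spelling out rather than merely asserting.
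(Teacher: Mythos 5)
Your proof is correct and follows the same route the paper takes: the same back-substitution recursion for $\rbr{\Rb^{LU}_{1}}^{-1}\Rb^{LU}_{2}$ combined with $\abbr{\Rb^{LU}(i,j)}\le 1$, and the same Kahan-type matrix for tightness. You merely spell out the reverse induction and geometric-sum calculation that the paper leaves implicit, and you verify explicitly that \plu applied to the Kahan-type input triggers no column swap (via upper-triangularity and vanishing Schur updates), which the paper simply asserts.
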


\begin{proof}[Proof of \Cref{lemma:LUPP_entry_bound}]
We start by observing the following recursive relations: for all $j=1,\dots,n-l$,
\begin{align*}
    & \rbr{\rbr{\Rb^{LU}_{1}}^{-1} \Rb^{LU}_{2}}_{lj} = \Rb^{LU}_{2}\rbr{l,j}
    \\
    & \rbr{\rbr{\Rb^{LU}_{1}}^{-1} \Rb^{LU}_{2}}_{ij} = \Rb^{LU}_{2}\rbr{i,j} - \sum_{\iota=i+1}^l \Rb^{LU}_{1}\rbr{i,\iota} \rbr{\rbr{\Rb^{LU}_{1}}^{-1} \Rb^{LU}_{2}}_{\iota,j}
    \quad \forall\ i=l-1,\dots,1,
\end{align*}
given $\Rb_1^{LU}\rbr{i,i} = 1$. Then both the upper bound and the Kahan example in \Cref{lemma:LUPP_entry_bound} follow from the fact that $\abbr{\Rb^{LU}\rbr{i,j}} \leq 1$ for all $i \in [l]$, $i \leq j \leq n$.
\end{proof}
In addition to the exponential worse-case scenario in \Cref{lemma:LUPP_entry_bound}, \plu is also vulnerable to rank deficiency since it only views one row for each pivoting step (in contrast to \pqr which searches the entire active submatrix). The advantage of the \plu type pivoting scheme is its superior empirical efficiency and parallelizability (\cite{geist1988lu, kurzak2007, grigori2011calu, solomonik2011communication}).
Fortunately, as with \pqr, adversarial inputs for \plu are sensitive to perturbations ($\eg$, flip the signs of random off-diagonal entries in $\Rb_1^{LU}$), and rarely encounted in practice.

\plu can be further stabilized with randomization (\cite{pan2015rand, pan2017num, trefethen1990}). In terms of the worse-case exponential entry-wise bound in \Cref{lemma:LUPP_entry_bound}, \cite{trefethen1990} investigated average-case growth factors of \plu on random matrices drawn from a variety of distributions ($\eg$, the Gaussian distribution, uniform distributions, Rademacher distribution, symmetry / Toeplitz matrices with Gaussian entries, and orthogonal matrices following Haar measure), and conjectured that the growth factor increases sublinearly with respect to the problem size in average cases.

\begin{remark}[Conjectured in \cite{trefethen1990}]\label{remark:lupp_stable_in_practice}
With randomized preprocessing like sketching, \plu is robust to adversarial inputs in practice, with $\max_{i,j} \abbr{\rbr{\rbr{\Rb^{LU}_{1}}^{-1} \Rb^{LU}_{2}}_{ij}} = O(l)$ in average cases.
\end{remark}

Some common alternatives to the partial pivoting for LU decompositions include (adaptive) cross approximations (\cite{Bebendorf2000ApproximationOB, tyrtyshnikov2000, zhao2005}), and complete pivoting.
Specifically, the complete pivoting is a more robust ($\eg$, to rank deficiency) alternative to partial pivoting that searches the entire active submatrix, and permutes rows and column simultaneously. In spite of lacking theoretical guarantees for the plain complete pivoting, like for QR decompositions, there exists modified complete pivoting strategies for LU that come with better rank-revealing guarantees (\cite{pan2000rrlu, gu2003srrlu,anderson2017, chen2020}), but higher computational cost as trade-off.

\section{Summary of existing algorithms}\label{sec:exist_cur_algo}

A vast assortment of algorithms for interpolative and CUR decompositions have been proposed and analyzed ($\eg$, \cite{deshpande2006, deshpande2010, drineas2008relative, mahoney2009,  bien2010cur, wang2012cur, cohen2014, woodruff2014optimalCUR, boutsidis2014, sorensen2014, anderson2015spectral, aizenbud2016randomized, voronin2017, anderson2017, shabat2018randomized, cortinovis2019lowrank, chen2020, derezinski2020improved}) in the past decades. From the skeleton selection perspection, these algorithms broadly fall in two categories:
\begin{enumerate}
    \item sampling based methods that draw matrix skeletons (directly, adaptively, or iteratively) from some proper distributions, and
    \item pivoting based methods that pick matrix skeletons greedily by constructing low-rank matrix decompositions with pivoting.
\end{enumerate}
In this section, we discuss existing algorithms for matrix skeletonizations, with a focus on algorithms based on randomized linear embeddings and matrix decompositions with pivoting.

\subsection{Sampling based skeleton selection}\label{subsec:sampling_cur}
The idea of skeleton selection via sampling is closely related to various topics including graph sparsification (\cite{batson2008}) and volume sampling (\cite{deshpande2006}).
Concerning volume sampling, \cite{deshpande2010, anderson2015spectral, cortinovis2019lowrank, derezinski2020improved} discussed adaptive sampling strategies that lead to matrix skeletons with close-to-optimal error guarantees, as discussed in \Cref{sec:cssp}.
\cite{drineas2008relative, mahoney2009, bien2010cur, drineas2012} pointed out the effect of matrix coherence, an inherited property of the matrix, on skeleton sampling, and proposed the idea of leverage score sampling for constructing CUR decompositions, as well as efficient estimations for the leverage scores.
Beginning with uniform sampling, \cite{cohen2014} provided extensive analysis on sampling based matrix approximations, and proposed an iterative sampling scheme for skeleton selection.
Some more sophisticated variations of sampling based skeleton selection algorithms were proposed in \cite{woodruff2014optimalCUR, boutsidis2014, wang2012cur} that combined varieties of sampling methods.

\subsection{Skeleton selection via deterministic pivoting}\label{subsec:pivoting_cur}
Greedy algorithms based on column and row pivoting can also be used for matrix skeletonizations. For instance, with proper rank-revealing pivoting like the strong rank-revealing QR proposed in \cite{gu1996}, a rank-$k$ ($k < r$) column ID can be constructed with the first $k$ column pivots
\begin{align*}
    \Ab \underset{n \times l}{\left[\Pib_{n,1}\right.},\ \underset{n \times (n-l)}{\left.\Pib_{n,2}\right]} = \underset{m \times k}{\left[\Qb_{A,1}\right.},\ \underset{m \times (r-k)}{\left.\Qb_{A,2}\right]} \bmat{\Rb_{A,11} & \Rb_{A,12} \\ \b{0} & \Rb_{A,22}} \approx \rbr{\Ab \Pib_{n,1}} \sbr{\Ib_k,\ \Rb_{A,11}^{-1} \Rb_{A,12}}
\end{align*}
where $\Pib_n = \sbr{\Pib_{n,1}, \Pib_{n,2}}$ is a permutation of columns; and $\Rb_{A,11}$ and $\Rb_{A,22}$ are non-singular and upper triangular. $\Cb = \rbr{\Ab \Pib_{n,1}}$ are the selected column skeletons that satisfies $\nbr{\Ab - \Cb \Cb^{\pinv} \Ab} = \nbr{\Rb_{A,22}} \lesssim \sqrt{k(n-k)} \nbr{\Ab - \Ab_k}$. As a more affordable alternative to the rank-revealing pivoting, the \pqr discussed in \Cref{subsec:pivoting_mat_decomp} also works well for skeleton selection in practice (\cite{voronin2017}), despite the weaker theoretical guarantee ($\ie$, $\nbr{\Ab - \Cb \Cb^{\pinv} \Ab} \lesssim 2^k \nbr{\Ab - \Ab_k}$) due to the known existence of adversarial inputs.
In addition to the QR based pivoting schemes, \cite{pan2000rrlu, gu2003srrlu, aizenbud2016randomized, anderson2017, shabat2018randomized, chen2020} proposed various (randomized) LU based pivoting algorithms with rank-revealing guarantees that can be leveraged for greedy matrix skeleton selection (as discussed in \cref{subsec:pivoting_mat_decomp}). Meanwhile, relaxing the rank-revealing requirements on pivoting schemes by pre-processing $\Ab$, \cite{sorensen2014, drmac2016qdeim} proposed the DEIM skeleton selection algorithm that apply \plu on the leading singular vectors of $\Ab$.

\subsection{Randomized pivoting based skeleton selection}\label{subsec:rand_pivot_cur}
In comparison to the sampling based skeleton selection, the deterministic pivoting based skeleton selection methods suffer from two major drawbacks. First, pivoting is usually unaffordable for large-scale problems in common modern applications. Second, classical pivoting schemes like the \pqr and \plu are vulnerable to antagonistic inputs.
Fortunately, randomized pre-processing with sketching provides remedies both problems:
\begin{enumerate}
    \item Faster execution speed is attained by executing classical pivoting schemes on a sketch
    $\Xb = \Gammab\Ab \in \R^{l \times n}$, for some randomized embedding $\Gammab$, instead on
    $\Ab$ directly.
    \item With randomization, classical pivoting schemes like the \pqr and \plu 
    are robust to adversarial inputs in practice (\Cref{remark:lupp_stable_in_practice}, \cite{trefethen1990}).
\end{enumerate}
\Cref{algo:sketch_pivot_CUR_general} describes a general framework for randomized pivoting based skeleton selection. Grounding this framework down with different combinations of row space approximators and pivoting schemes, \cite{voronin2017} took $\Xb = \Gammab \Ab$ as a row sketch, and applied \pqr to $\Xb$ for column skeleton selection.
Alternatively, the DEIM skeleton selection algorithm proposed in \cite{sorensen2014} can be accelerated by taking $\Xb$ as an approximation of the leading-$l$ right singular vectors of $\Ab$ (\Cref{eq:rsvd_procedures}), where \plu is applied for skeleton selection.

\begin{algorithm}[ht]
\caption{Randomized pivoting based skeleton selection: a general framework}\label{algo:sketch_pivot_CUR_general}
\begin{algorithmic}[1]
\REQUIRE $\Ab \in \R^{m \times n}$ of rank $r$, rank $l \leq r$ (typically $l \ll \min(m,n)$).
\ENSURE Column and/or row skeleton indices, $J_s \subset [n]$ and/or $I_s \subset [m]$, $\abbr{J_s} = \abbr{I_s} = l$.

\STATE Draw an oblivious $\ell_2$-embedding $\Gammab \in \R^{l \times m}$.
\STATE Construct a row space approximator $\Xb \in R^{l \times n}$ via sketching with $\Gammab$.\\
$\eg$, $\Xb$ can be 1) a row sketch or 2) approximations of right singular vectors.
\STATE Perform column-wise pivoting on $\Xb$. Let $J_s$ index the $l$ column pivots.
\STATE Perform row-wise pivoting on $\Cb = \Ab(:,J_s)$. Let $I_s$ index the $l$ row pivots.
\end{algorithmic}
\end{algorithm}

First, we recall from \Cref{lemma:sketch_for_rand_rangefinder} that when taking $\Xb$ as a row sketch, with Gaussian embeddings, $\Gammab$ and $\Xb = \Gammab \Ab$ are both full row rank with probability $1$. 
Moreover, when taking $\Xb$ as an approximation of right singular vectors constructed with a row space approximator consisting of $l$ linearly independent rows, $\Xb$ also admits full row rank.

Second, when both column and row skeletons are inquired, \Cref{algo:sketch_pivot_CUR_general} selects the column skeletons first with randomized pivoting, and subsequently identifies the row skeletons by pivoting on the selected columns. 
With $\Xb$ being full row rank (almost surely when $\Gammab$ is a Gaussian embedding), the column skeletons $\Cb$ are linearly independent. Therefore, the row-wise skeletonization of $\Cb$ is exact, without introducing additional error. That is, the two-sided ID constructed by \Cref{algo:sketch_pivot_CUR_general} is equal to the associated column ID in exact arithmetic, $\tsid{\Ab}{I_s,J_s} = \cid{\Ab}{J_s}$.

\section{A simple but effective modification: \plu on sketches}\label{sec:cur_rand_lupp}
Inspired by the idea of pivoting on sketches (\cite{voronin2017}) and the remarkably competitive performance of \plu when applied to leading singular vectors (\cite{sorensen2014}), we propose a simple but effective modification -- applying \plu directly to a sketch of $\Ab$. In terms of the general framework in \Cref{algo:sketch_pivot_CUR_general}, this corresponds to taking $\Xb$ as a row sketch of $\Ab$, and then selecting skeletons via \plu on $\Xb$ and $\Cb$, as summarized in \Cref{algo:rand-id-lupp}.

\begin{algorithm}[ht]
\caption{Randomized \plu skeleton selection}
\label{algo:rand-id-lupp}
\begin{algorithmic}[1]
\REQUIRE $\Ab \in \R^{m \times n}$ of rank $r$, rank $l \leq r$ (typically $l \ll \min(m,n)$).
\ENSURE Column and/or row skeleton indices, $J_s \subset [n]$ and/or $I_s \subset [m]$, $\abbr{J_s} = \abbr{I_s} = l$.

\STATE Draw an oblivious $\ell_2$-embedding $\Gammab \in \R^{l \times m}$.
\STATE Construct a row sketch $\Xb = \Gammab \Ab$.
\STATE Perform column-wise \plu on $\Xb$. Let $J_s$ index the $l$ column pivots.
\STATE Perform row-wise \plu on $\Cb = \Ab(:,J_s)$. Let $I_s$ index the $l$ row pivots.
\end{algorithmic}
\end{algorithm}

Comparing to pivoting with \pqr (\cite{voronin2017}), \Cref{algo:rand-id-lupp} with \plu is empirically faster, as discussed in \Cref{subsec:pivoting_mat_decomp}, and illustrated in \Cref{fig:pivot-ps-time}.
Meanwhile, assuming that the true SVD of $\Ab$ is unavailable, in comparison to pivoting on the approximated leading singular vectors (\cite{sorensen2014}) from \Cref{eq:rsvd_procedures}, \Cref{algo:rand-id-lupp} saves the effort of constructing randomized SVD which takes $O\rbr{\nnz(\Ab)l + (m+n)l^2}$ additional operations. Additionally, with randomization, the stability of \plu conjectured in \cite{trefethen1990} (\Cref{remark:lupp_stable_in_practice}) applies, and \Cref{algo:rand-id-lupp} effectively circumvents the potential vulnerability of \plu to adversarial inputs in practice.
A formal error analysis of \Cref{algo:sketch_pivot_CUR_general} in general reflects these points:

\begin{theorem}[Column skeleton selection by pivoting on a row space approximator]\label{thm:pivoting_on_rangeapprox_error_bound}
Given a row space approximator $\Xb \in \R^{l \times n}$ ($l \leq r$) of $\Ab$ that adimits full row rank, let $\Pib_n \in \R^{n \times n}$ be the resulted permutation after applying some proper column pivoting scheme on $\Xb$ that identifies $l$ linearly independent column pivots: for the $(l,n-l)$ column-wise partition $\Xb \Pib_n = \Xb \sbr{\Pib_{n,1}, \Pib_{n,2}} = \sbr{\Xb_{1}, \Xb_{2}}$, the first $l$ column pivots $\Xb_1 = \Xb\Pib_{n,1} \in \R^{l \times l}$ admits full column rank. 
Moreover, the rank-$l$ column ID $\cid{\Ab}{J_s} = \Cb \Cb^{\pinv} \Ab$, with linearly independent column skeletons $\Cb = \Ab \Pib_{n,1}$, satisfies that
\begin{align}\label{eq:pivoting_on_rangeapprox_error_bound}
    \nbr{\Ab - \Cb \Cb^{\pinv} \Ab} \leq \eta \nbr{\Ab - \Ab \Xb^{\pinv} \Xb},
\end{align}
where $\eta \leq \sqrt{1 + \nbr{\Xb_{1}^{\pinv} \Xb_{2}}_2^2}$, and $\nbr{\cdot}$ represents the spectral or Frobenius norm.
\end{theorem}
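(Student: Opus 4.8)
The plan is to exploit the optimality of the orthogonal projector $\Cb\Cb^{\pinv}$ against a single, carefully chosen competitor, and then to recast the resulting residual in terms of the rangefinder residual $\Eb \triangleq \Ab - \Ab\Xb^{\pinv}\Xb$, whose norm $\nbr{\Eb}$ is exactly the right-hand side of \Cref{eq:pivoting_on_rangeapprox_error_bound}. Throughout I would abbreviate $\Bb \triangleq \Ab\Xb^{\pinv} \in \R^{m \times l}$ (so $\Eb = \Ab - \Bb\Xb$) and $\Wb \triangleq \Xb_1^{\pinv}\Xb_2 = \Xb_1^{-1}\Xb_2$, using that the full-rank square matrix $\Xb_1 \in \R^{l \times l}$ is invertible. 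Note that $\Xb\Pib_n = \sbr{\Xb_1,\Xb_2} = \Xb_1\sbr{\Ib_l,\Wb}$, so $\Xb\Pib_{n,1} = \Xb_1$ and $\Xb\Pib_{n,2} = \Xb_1\Wb$.

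First I would record the projection-suboptimality inequality $\nbr{\Ab - \Cb\Cb^{\pinv}\Ab} \leq \nbr{\Ab - \Cb\Zb}$, valid for \emph{every} $\Zb \in \R^{l \times n}$ and for both the spectral and the Frobenius norm: writing $\Ab - \Cb\Zb = (\Ib_m - \Cb\Cb^{\pinv})\Ab + \Cb(\Cb^{\pinv}\Ab - \Zb)$, the two summands have mutually orthogonal column spaces, so the same orthogonal-decomposition argument as in \Cref{remark:id_cur_suboptimality} applies. The crux is then to specialize this to
\begin{align*}
    \Zb \triangleq \sbr{\Ib_l,\Wb}\Pib_n^{\top},
\end{align*}
that is, to the interpolation matrix that the column pivoting \emph{already produced on $\Xb$}, reused verbatim on the columns of $\Ab$. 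For this $\Zb$ one computes $(\Ab - \Cb\Zb)\Pib_n = \sbr{\b{0}_{m \times l},\ \Ab\Pib_{n,2} - \Cb\Wb}$, and since $\Pib_n$ is orthogonal and padding with zero columns changes neither norm, $\nbr{\Ab - \Cb\Zb} = \nbr{\Ab\Pib_{n,2} - \Cb\Wb}$.

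Next I would expand $\Ab\Pib_{n,j} = \Bb(\Xb\Pib_{n,j}) + \Eb\Pib_{n,j}$ for $j = 1,2$ inside $\Ab\Pib_{n,2} - \Cb\Wb = \Ab\Pib_{n,2} - (\Ab\Pib_{n,1})\Wb$. The two $\Bb\Xb_1\Wb$ contributions cancel, precisely because $\Xb\Pib_{n,2} = \Xb_1\Wb = (\Xb\Pib_{n,1})\Wb$, and what remains is the clean identity
\begin{align*}
    \Ab\Pib_{n,2} - \Cb\Wb = \Eb\Pib_{n,2} - (\Eb\Pib_{n,1})\Wb = \Eb\,\Pib_n\bmat{-\Wb \\ \Ib_{n-l}}.
\end{align*}
Submultiplicativity ($\nbr{\Mb\Nb} \leq \nbr{\Mb}\,\nbr{\Nb}_2$, which holds for $\nbr{\cdot}_2$ and $\nbr{\cdot}_F$ alike), unitary invariance of $\nbr{\cdot}$ under $\Pib_n$, and the elementary block computation $\nbr{\bmat{-\Wb \\ \Ib_{n-l}}}_2^2 = \nbr{\Ib_{n-l} + \Wb^{\top}\Wb}_2 = 1 + \nbr{\Wb}_2^2$ then chain together into $\nbr{\Ab - \Cb\Cb^{\pinv}\Ab} \leq \sqrt{1 + \nbr{\Xb_1^{\pinv}\Xb_2}_2^2}\;\nbr{\Eb}$, which is \Cref{eq:pivoting_on_rangeapprox_error_bound}. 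For the side claim that $\Cb = \Ab\Pib_{n,1}$ has full column rank, I would give a one-line null-space argument: because the row space approximator $\Xb$ is obtained by sketching $\Ab$, its rows lie in the row space of $\Ab$, so $\Xb = \Mb\Ab$ for some $\Mb$, and then $\Cb\vb = \b{0}$ forces $\Xb_1\vb = \Mb\Ab\Pib_{n,1}\vb = \b{0}$, hence $\vb = \b{0}$.

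The only genuinely inventive step --- hence the main obstacle --- is guessing the competitor $\Zb = \sbr{\Ib_l,\Wb}\Pib_n^{\top}$ and noticing that feeding the rangefinder splitting $\Ab = \Bb\Xb + \Eb$ into $\Ab\Pib_{n,2} - \Cb\Wb$ annihilates the dominant ($\Bb$) part exactly; everything after that is submultiplicativity together with a $2\times 2$-block norm identity. A secondary point that needs attention is to state the projection-suboptimality and submultiplicativity steps in a form that holds simultaneously for $\nbr{\cdot}_2$ and $\nbr{\cdot}_F$, in keeping with the paper's convention for $\nbr{\cdot}$.
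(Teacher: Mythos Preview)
Your argument is correct and complete for the main inequality. It is, however, organized differently from the paper's proof. The paper introduces two oblique projectors $\pobx \triangleq \Pib_{n,1}(\Xb\Pib_{n,1})^{\pinv}\Xb$ and $\pobc \triangleq \Pib_{n,1}(\Cb^{\top}\Cb)^{\pinv}\Cb^{\top}\Ab$, verifies the absorption identity $\pobc\pobx = \pobx$, and uses it to write $\Ab(\Ib-\pobc) = (\Ib-\Cb\Cb^{\pinv})\Ab(\Ib-\pobx)$; then it factors $\Ab(\Ib-\pobx) = \Ab(\Ib-\Xb^{\pinv}\Xb)(\Ib-\pobx)$ and bounds $\nbr{\Ib-\pobx}_2$ by the same block computation you do. Your route replaces the oblique-projector machinery with the variational (best-approximation) property of $\Cb\Cb^{\pinv}$ against the explicit competitor $\Zb = \sbr{\Ib_l,\Wb}\Pib_n^{\top}$; since in fact $\Cb\Zb = \Ab\pobx$, both routes land on $\nbr{\Ab - \Cb\Cb^{\pinv}\Ab} \le \nbr{\Ab(\Ib-\pobx)}$ and then finish identically. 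Your version is a bit more elementary and, incidentally, does not actually require $\Cb$ to have full column rank for the inequality itself (the paper's proof uses $(\Cb^{\top}\Cb)^{\pinv}\Cb^{\top}\Cb = \Ib_l$, which does).

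One caveat on your side claim: your null-space argument for $\rank(\Cb)=l$ assumes $\Xb = \Mb\Ab$ for some $\Mb$, i.e.\ that the rows of $\Xb$ lie in the row space of $\Ab$. That is true in every instantiation the paper cares about (sketches, power iterations, approximate singular vectors), but it is \emph{not} part of the theorem's hypotheses as stated; indeed one can have $\Xb$ full row rank and $\Xb_1$ invertible while $\Cb = \Ab\Pib_{n,1}$ is rank deficient (take $\Ab = \bmat{1 & 0\\ 0 & 0}$, $\Xb = \bmat{0 & 1}$). The theorem statement appears to \emph{assume} ``linearly independent column skeletons $\Cb$'' rather than assert it, so you can simply drop that paragraph or flag it as holding under the additional (and in context always satisfied) assumption that $\mathrm{row}(\Xb) \subset \mathrm{row}(\Ab)$.
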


\Cref{thm:pivoting_on_rangeapprox_error_bound} states that when selecting column skeletons by pivoting on a row space approximator, the low-rank approximation error of the resulting column ID is upper bounded by that of the associated row space approximator up to a factor $\eta > 1$ that can be computed a posteriori efficently in $O\rbr{l^2(n-l)}$ operations.
\Cref{eq:pivoting_on_rangeapprox_error_bound} essentially decouples the error from the row space approximation with $\Xb$ ($\nbr{\Ab - \Ab \Xb^{\pinv} \Xb}$ corresponding to Line 1 and 2 of \Cref{algo:sketch_pivot_CUR_general}) and that from the skeleton selection by pivoting on $\Xb$ ($\eta$ corresponding to Line 3 and 4 of \Cref{algo:sketch_pivot_CUR_general}).
Now we ground \Cref{thm:pivoting_on_rangeapprox_error_bound} with different choices of row space approximation and pivoting strategies:
\begin{enumerate}
    \item With \Cref{algo:rand-id-lupp}, $\nbr{\Ab - \Ab \Xb^{\pinv} \Xb}$ is the randomized rangefinder error (\Cref{eq:rand_rangefinder_error_bound}, \cite{halko2011} Section 10), and $\eta \leq \sqrt{1+\nbr{\rbr{\Rb_1^{LU}}^{-1} \Rb_2^{LU}}_2^2}$ (recall \Cref{eq:def_LUPP}). Although in the worse case scenario (where the entry-wise upper bound in \Cref{lemma:LUPP_entry_bound} is tight), $\eta = \Theta\rbr{2^l \sqrt{n-l}}$, with a randomized row sketch $\Xb$, assuming the stability of \plu conjectured in \cite{trefethen1990} holds (\Cref{remark:lupp_stable_in_practice}), $\eta = O\rbr{l^{3/2} \sqrt{n-l}}$.
    \item Skeleton selection with \pqr on row sketches ($\ie$, randomized \pqr proposed by \cite{voronin2017}) shares the same error bound as \Cref{algo:rand-id-lupp} ($\ie$, analogous arguments hold for $\nbr{\rbr{\Rb_1^{QR}}^{-1} \Rb_2^{QR}}_2^2$).
    \item When applying \plu on the true leading singular vectors ($\ie$, DEIM proposed in \cite{sorensen2014}, assuming that the true SVD is available), $\nbr{\Ab - \Ab \Xb^{\pinv} \Xb} = \nbr{\Ab - \Ab_l}$, but without randomization, LUPP is vulnerable to adversarial inputs which can lead to $\eta = \Theta\rbr{2^l \sqrt{n-l}}$ in the worse case.
    \item When applying \plu on approximations of leading singular vectors (constructed via \Cref{eq:rsvd_procedures}, $\ie$, randomized DEIM suggested by \cite{sorensen2014}), $\nbr{\Ab - \Ab \Xb^{\pinv} \Xb}$ corresponds to the randomized rangefinder error with power itertions (\cite{halko2011} Corollary 10.10), while $\eta$ follows the analogous analysis as for \Cref{algo:rand-id-lupp}.
\end{enumerate}

\begin{proof}[Proof of \Cref{thm:pivoting_on_rangeapprox_error_bound}]
We start by defining two oblique projectors
\begin{align*}
    \underset{n \times n}{\pobx} \triangleq \Pib_{n,1} \rbr{\Xb \Pib_{n,1}}^{\pinv} \Xb,
    \quad
    \underset{n \times n}{\pobc} \triangleq \Pib_{n,1} \rbr{\Cb^{\top} \Cb}^{\pinv} \Cb^{\top} \Ab,
\end{align*}
and observe that, since $\Cb$ consists of linearly independent columns, $\rbr{\Cb^{\top} \Cb}^{\pinv} \Cb^{\top} \Ab \Pib_{n,1} = \Ib_l$, and
\begin{align*}
    \pobc\pobx = \Pib_{n,1} \rbr{\Cb^{\top} \Cb}^{\pinv} \Cb^{\top} \Ab \Pib_{n,1} \rbr{\Xb \Pib_{n,1}}^{\pinv} \Xb = \pobx.
\end{align*}
With $\pobc$, we can express the column ID as
\begin{align*}
    \cid{\Ab}{J_s} = \Cb \Cb^{\pinv} \Ab = \Ab \rbr{\Pib_{n,1} \rbr{\Cb^{\top} \Cb}^{\pinv} \Cb^{\top} \Ab} = \Ab \pobc,
\end{align*}
Therefore, the low-rank approximation error of $\cid{\Ab}{J_s}$ satisfies
\begin{align*}
    \nbr{\Ab - \Cb \Cb^{\pinv} \Ab} = & \nbr{\Ab \rbr{\Ib - \pobc}}
    \\
    = & \nbr{\Ab \rbr{\Ib_n - \pobc} \rbr{\Ib_n - \pobx}}
    \\
    = & \nbr{\rbr{\Ib_m - \Cb \Cb^{\pinv}} \Ab \rbr{\Ib_n - \pobx}}
    \\
    \leq & \nbr{\Ib_m - \Cb \Cb^{\pinv}}_2 \nbr{\Ab \rbr{\Ib_n - \pobx}},
\end{align*}
where $\nbr{\Ib_m - \Cb \Cb^{\pinv}}_2 = 1$, and since $\Xb \pobx = \Xb_1 \Xb_1^{\pinv} \Xb = \Xb$ with $\Xb_1$ being full-rank,
\begin{align*}
    \nbr{\Ab \rbr{\Ib_n - \pobx}} = \nbr{\Ab \rbr{\Ib_n - \Xb^{\pinv} \Xb} \rbr{\Ib_n - \pobx}} = \nbr{\Ib_n - \pobx}_2 \nbr{\Ab \rbr{\Ib_n - \Xb^{\pinv} \Xb}}.
\end{align*}
As a result, we have
\begin{align*}
    \nbr{\Ab - \Cb \Cb^{\pinv} \Ab} \leq \nbr{\Ib_n - \pobx}_2 \nbr{\Ab \rbr{\Ib_n - \Xb^{\pinv} \Xb}},
\end{align*}
and it is sufficient to show that $\eta \triangleq \nbr{\Ib_n - \pobx}_2 \leq \sqrt{1 + \nbr{\Xb_{1}^{\pinv} \Xb_{2}}_2^2}$.
Indeed,
\begin{align*}
    \Ib_n - \pobx = \Pib_n^{\top} \Pib_n - \Pib_n^{\top} \Pib_{n,1} \rbr{\Xb \Pib_{n,1}}^{\pinv} \Xb \Pib_n
    = \Ib_n - \bmat{\Ib_l \\ \b0} \Xb_1^{\pinv} \bmat{\Xb_1 \Xb_2} = \bmat{\b0 & - \Xb_1^{\pinv} \Xb_2 \\ \b0 & \Ib_{n-l}}
\end{align*}
such that $\eta = \nbr{\sbr{- \Xb_1^{\pinv} \Xb_2; \Ib_{n-l}}}_2 \leq \sqrt{1 + \nbr{\Xb_{1}^{\pinv} \Xb_{2}}_2^2}$.
\end{proof}

Here, the proof of \Cref{thm:pivoting_on_rangeapprox_error_bound} is reminiscent of \cite{sorensen2014}, while it generalizes the result for fixed right leading singular vectors to any proper row space approximators of $\Ab$ ($\eg$, $\wh\Vb_A$ in \Cref{eq:rsvd_procedures}, or simply a row sketch). The generalization of \Cref{thm:pivoting_on_rangeapprox_error_bound} leads to a factor $\eta$ that is efficiently computable a posteriori, which can serve as an empirical replacement of the exponential upper bound induced by the scarce adversarial inputs.

In addition to empirical efficiency and robustness discussed above, \Cref{algo:rand-id-lupp} has another potential advantage: the skeleton selection algorithm can be easily adapted to the streaming setting. 
The streaming setting considers $\Ab$ as a data stream that can only be accessed as a sequence of snapshots. Each snapshot of $\Ab$ can be viewed only once, and the storage of the entire matrix $\Ab$ is infeasible (\cite{tropp2017a, tropp2019streaming, martinsson2020}).

\begin{algorithm}[ht]
\caption{Streaming \plu/\pqr skeleton selection}\label{algo:sketch_pivot_CUR_online}
\begin{algorithmic}[1]
\REQUIRE $\Ab \in \R^{m \times n}$ of rank $r$, rank $l \leq r$ (typically $l \ll \min(m,n)$).
\ENSURE Column and/or row skeleton indices, $J_s \subset [n]$ and/or $I_s \subset [m]$, $\abbr{J_s} = \abbr{I_s} = l$.

\STATE Draw independent oblivious $\ell_2$-embeddings $\Gammab \in \R^{l \times m}$ and $\Omegab \in \R^{l \times n}$.
\STATE Construct row and column sketches, $\Xb = \Gammab \Ab$ and $\Yb = \Ab \Omegab^{\top}$, in a single pass through $\Ab$.
\STATE Perform column-wise pivoting (\plu/\pqr) on $\Xb$. Let $J_s$ index the $l$ column pivots.
\STATE Perform row-wise pivoting (\plu/\pqr) on $\Yb$. Let $I_s$ index the $l$ row pivots.
\end{algorithmic}
\end{algorithm}

\begin{remark}
When only the column and/or row skeleton \textit{indices} are required (and not the explicit construction of the corresponding interpolative or CUR decomposition), \Cref{algo:rand-id-lupp} can be adapted to the streaming setting (as shown in \Cref{algo:sketch_pivot_CUR_online}) by sketching both sides of $\Ab$ independently in a single pass, and pivoting on the resulting column and row sketches.
Moreover, with the column and row skeletons $J_s$ and $I_s$ from \Cref{algo:sketch_pivot_CUR_online}, \Cref{thm:pivoting_on_rangeapprox_error_bound} and its row-wise analog together, along with \Cref{eq:id_cur_suboptimality}, imply that
\begin{align*}
    \nbr{\Ab - \cur{\Ab}{I_s,J_s}} \leq \eta_X \nbr{\Ab - \Ab \Xb^{\pinv} \Xb} + \eta_Y \nbr{\Ab - \Yb \Yb^{\pinv} \Ab},
\end{align*}
where $\eta_X$ and $\eta_Y$ are small in practice with pivoting on randomized sketches, and have efficiently a posteriori computable upper bounds given $\Xb$ and $\Yb$, as discussed previously. $\nbr{\Ab - \Ab \Xb^{\pinv} \Xb}_F^2$ and $\nbr{\Ab - \Yb \Yb^{\pinv} \Ab}$ are the randomized rangefinder errors with well-established upper bounds (\cite{halko2011} Section 10).
\end{remark}

We point out that, although the column and row skeleton selection can be conducted in streaming fashion, the explicit stable construction of ID or CUR requires two additional passes through $\Ab$: one pass for retrieving the skeletons $\Cb$ and/or $\Rb$, and the other pass to construct $\Cb^\pinv \Ab \Rb^\pinv$ for CUR, or $\Cb^\pinv \Ab$, $\Ab \Rb^\pinv$ for IDs.
In practice, for efficient estimations of the ID or CUR when revisiting $\Ab$ is expensive, it is possible to circumvent the second pass through $\Ab$ with compromise on accuracy and stability, albeit the inevitability of the first pass for skeleton retrieval. 

Precisely for the ID, $\Cb^\pinv \Ab$ (in \Cref{eq:def_column_id}) or $\Ab \Rb^\pinv$ (in \Cref{eq:def_row_id}) can be estimated without revisiting $\Ab$ leveraging the associated row and column sketches: 
\begin{align*}
    \Cb^\pinv \Ab \approx \Xb_1^\pinv \Xb,
    \quad 
    \Ab \Rb^\pinv \approx \Yb \Yb_1^\pinv,    
\end{align*}    
where $\Xb_1 = \Xb\rbr{:,J_s}$ and $\Yb_1 = \Yb\rbr{I_s,:}$ are the $l$ column and row pivots in $\Xb$ and $\Yb$, respectively.
Meanwhile for the CUR, by retrieving the skeletons $\Sb = \Ab\rbr{I_s,J_s}$, $\Cb=\Ab\rbr{:,J_s}$ and $\Rb=\Ab\rbr{I_s,:}$, we can construct a CUR decomposition $\Cb \Sb^{-1} \Rb$, despite the compromise on both accuracy and stability.

\section{Numerical experiments}\label{sec:experiments}

In this section, we study the empirical performance of various randomized skeleton selection algorithms.
Starting with the randomized pivoting based algorithms, we investigate the efficiency of two major components of \Cref{algo:sketch_pivot_CUR_general}: 
(1) the sketching step for row space approximator construction, and 
(2) the pivoting step for greedy skeleton selection. 
Then we explore the suboptimality (in terms of low-rank approximation errors of the resulting CUR decompositions $\nbr{\Ab - \cur{\Ab}{I_s,J_s}}$), as well as the efficiency (in terms of empirical run time), of different randomized skeleton selection algorithms.

We conduct all the experiments, except for those in \Cref{fig:sketch-ps-time} on the efficiency of sketching, in MATLAB R2020a. In the implementation, the computationally dominant processes, including the sketching, \plu, \pqr, and SVD, are performed by the MATLAB built-in functions.
The experiments in \Cref{fig:sketch-ps-time} are conducted in Julia Version 1.5.3 with the JuliaMatrices/LowRankApprox.jl package (\cite{lowrankjl2020}).

\subsection{Computational speeds of different embeddings}
\label{subsec:exp_embedding}
Here, we compare the empirical efficiency of constructing sketches with some common randomized embeddings listed in \Cref{tab:embedding_summary}. We consider applying an embedding $\Gammab$ of size $l \times m$ to a matrix $\Ab$ of size $m \times n$, which can be interpreted as embedding $n$ vectors in an ambient space $\R^m$ to a lower dimensional space $\R^l$. We scale the experiments with respect to the ambient dimension $m$, at several different embedding dimension $l$, with a fixed number of repetitions $n=1000$.
Figure \ref{fig:sketch-ps-time} suggests that, with proper implementation, the sparse sign matrices are more efficient than the Gaussian embeddings and the SRTTs, especially for large scale problems. The SRTTs outperform Gaussian embeddings in terms of efficiency, and such advantage can be amplified as $l$ increases. These observations align with the asymptotic complexity in \Cref{tab:embedding_summary}. While we also observe that, with MATLAB default implementation, the Gaussian embeddings usually enjoy matching efficiency as sparse sign matrices for moderate size problems, and are more efficient than SRTTs.

\begin{figure}[ht]
    \centering
    \begin{subfigure}{0.32\textwidth}
    \centering
    \includegraphics[width=\linewidth]{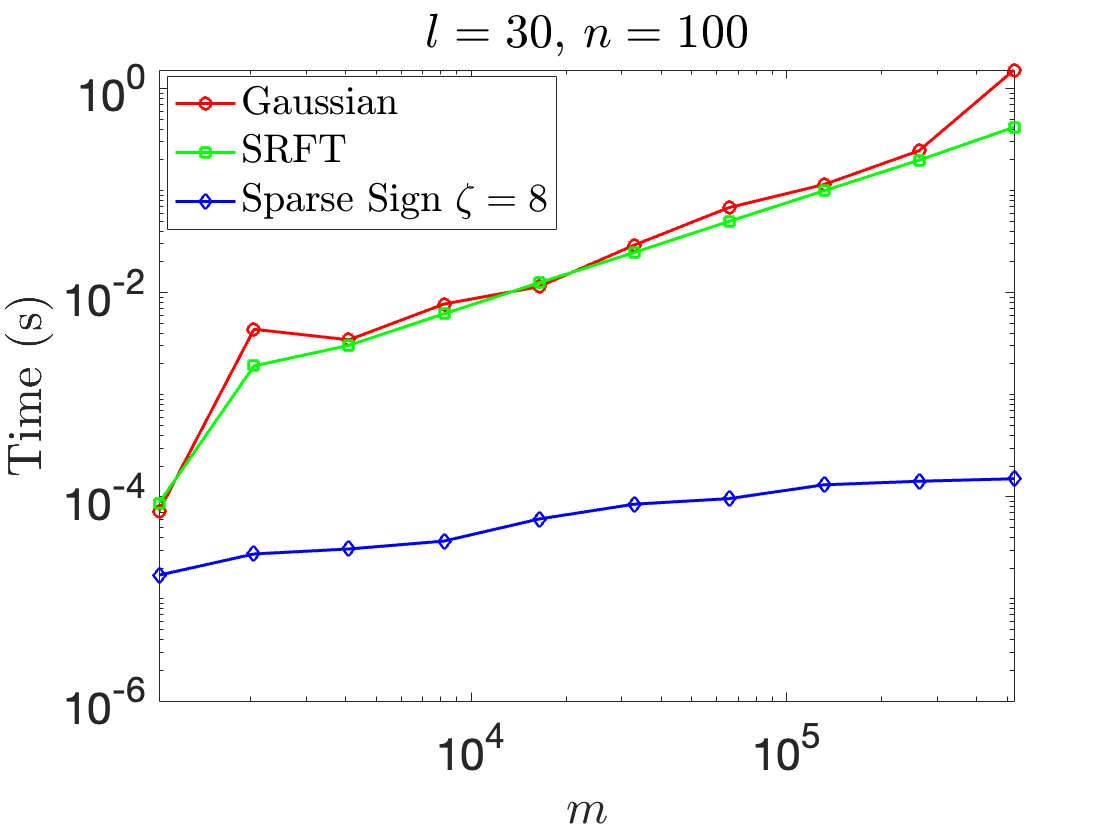}
    \end{subfigure}
    \begin{subfigure}{0.32\textwidth}
    \centering
    \includegraphics[width=\linewidth]{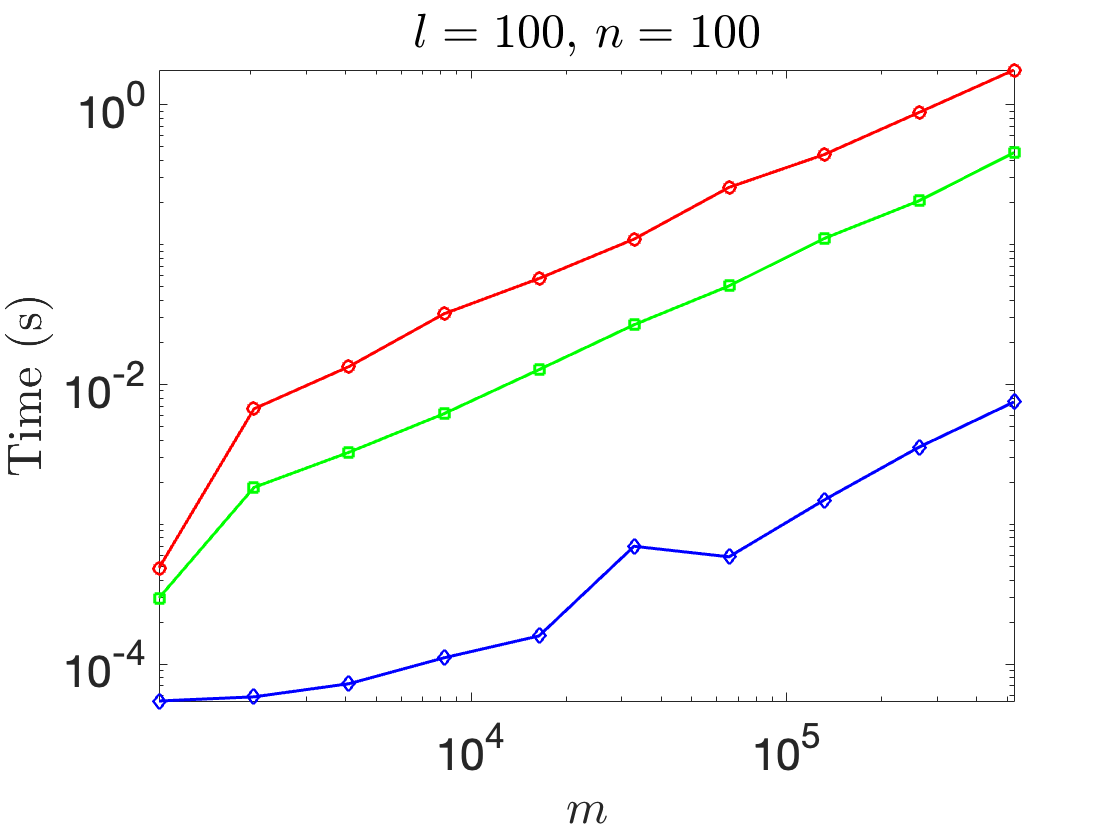}
    \end{subfigure}
    \begin{subfigure}{0.32\textwidth}
    \centering
    \includegraphics[width=\linewidth]{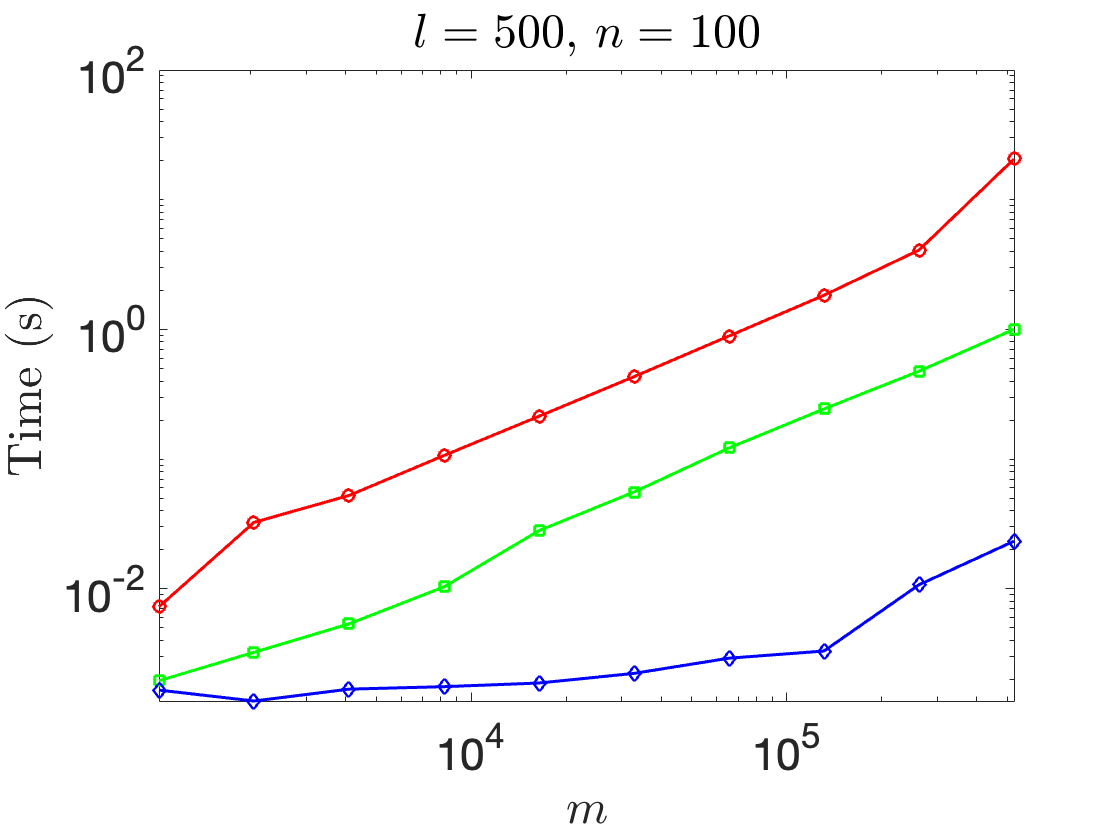}
    \end{subfigure}
    \caption{Run time of applying different randomized embeddings $\bs{\Gamma} \in \R^{l \times m}$ to some dense matrices of size $m \times n$, scaled with respect to the ambience dimension $m$, with different embedding dimension $l$, and a fixed number of embeddings $n=100$.}
    \label{fig:sketch-ps-time}
\end{figure}

\subsection{Computational speeds of different pivoting schemes}\label{subsec:exp_pivot}
Given a sketch of $\Ab$, we isolate different pivoting schemes in \Cref{algo:sketch_pivot_CUR_general}, and compare their run time as the problem size $n$ increases.
Specifically, the \plu and \pqr pivot directly on the given row sketch $\Xb = \Gammab \Ab \in \R^{l \times n}$, while the DEIM involves one additional power iteration with orthogonalization (\Cref{eq:ortho_power_iter}) before applying the \plu ($\ie$, with a given column sketch $\Yb = \Ab \Omegab \in \R^{m \times l}$, for DEIM, we first construct an orthonormal basis $\Qb_Y \in \R^{m \times l}$ for columns of the sketch, and then we compute the reduced SVD for $\Qb_Y^{\top} \Ab \in l \times n$, and finally we column-wisely pivot on the resulting right singular vectors of size $l \times n$).
\begin{figure}[!ht]
    \centering
    \begin{subfigure}{0.32\textwidth}
    \centering
    \includegraphics[width=\linewidth]{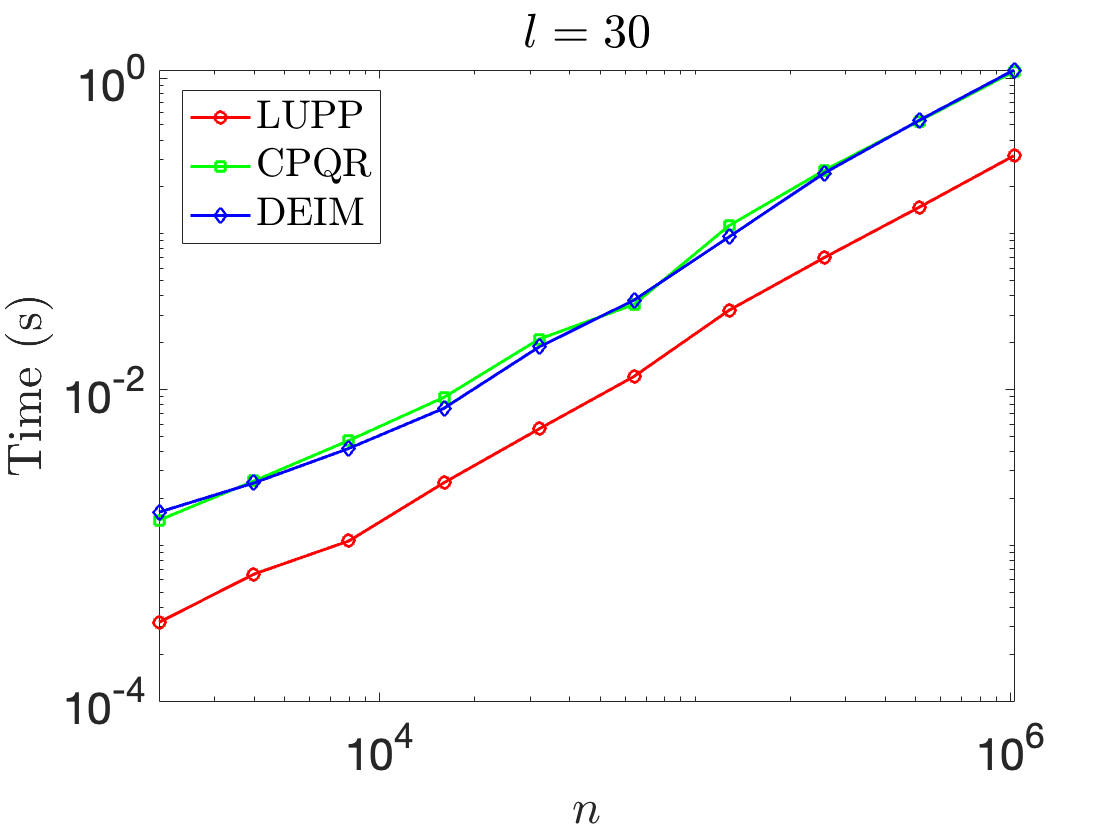}
    \end{subfigure}
    \begin{subfigure}{0.32\textwidth}
    \centering
    \includegraphics[width=\linewidth]{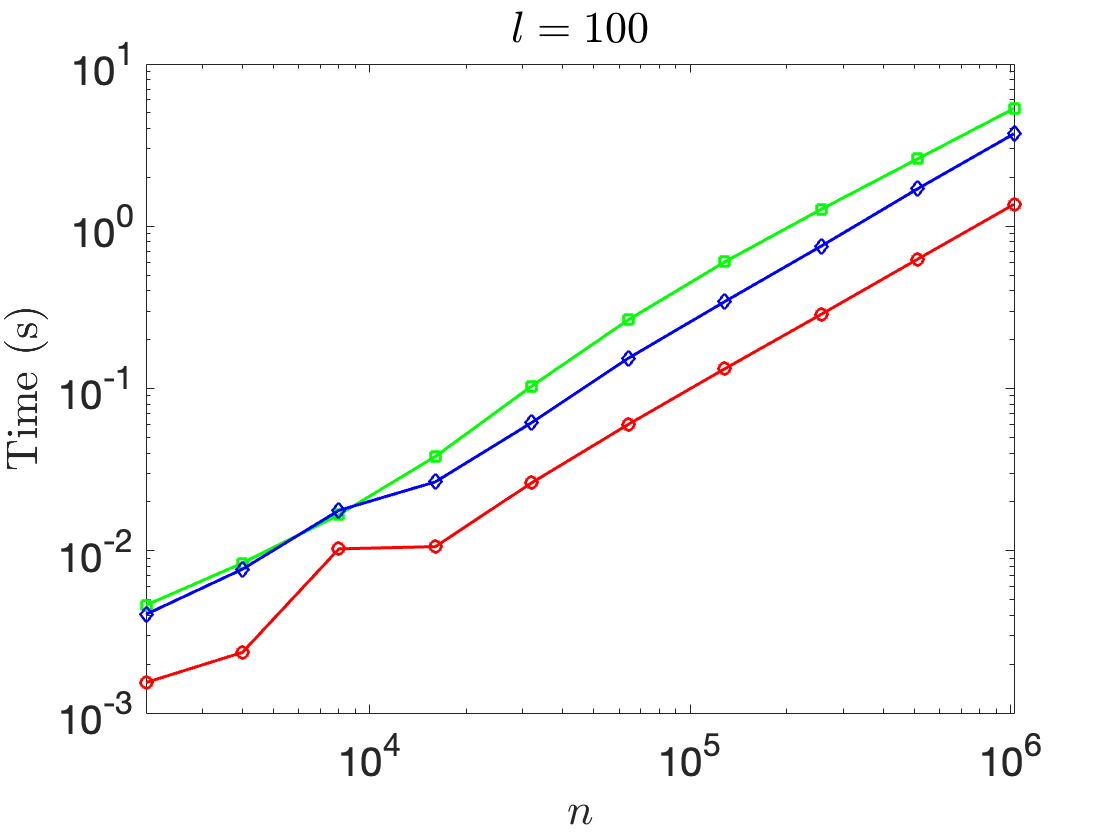}
    \end{subfigure}
    \begin{subfigure}{0.32\textwidth}
    \centering
    \includegraphics[width=\linewidth]{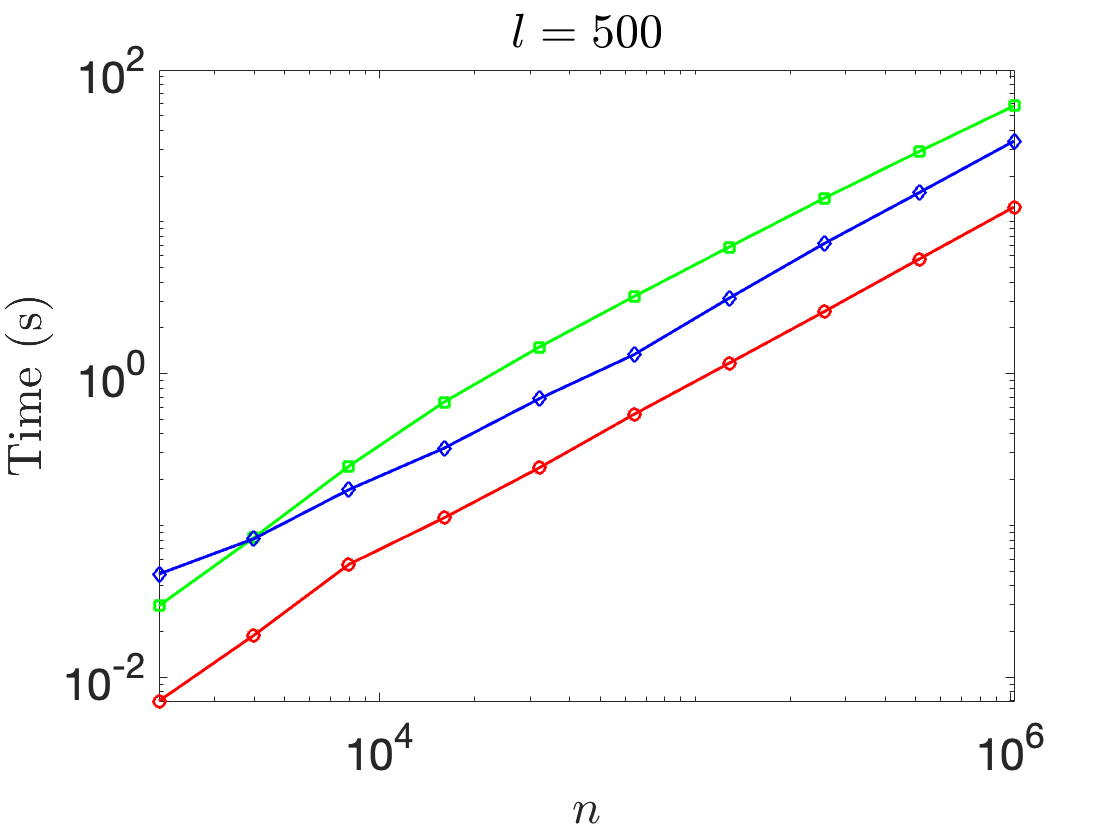}
    \end{subfigure}
    \caption{Run time of different pivoting schemes, scaled with respect to the problem size $n$, with different embedding dimension $l$.}
    \label{fig:pivot-ps-time}
\end{figure}
In Figure \ref{fig:pivot-ps-time}, we observe a considerable run time advantage of the \plu over the \pqr and DEIM, especially when $l$ is large. (Additionally, we see that DEIM slightly outperforms CPQR, which is perhaps surprising, given the substantially larger number of flops required by DEIM.)

\subsection{Randomized skeleton selection algorithms: accuracy and efficiency}\label{subsec:exp_cur}
As we move from measuring speed to measuring the precision of revealing the numerical rank of a matrix, 
the choice of test matrix becomes important. 
We consider four different classes of test matrices, including some synthetic random matrices with 
different spectral patterns, as well as some empirical datasets, as summarized below:
\begin{enumerate}
    \item \texttt{large}: a full-rank $4,282 \times 8,617$ sparse matrix with $20,635$ nonzero entries from the SuiteSparse matrix collection, generated by a linear programming problem sequence \cite{large}. 
    \item \texttt{YaleFace64x64}: a full-rank $165 \times 4096$ dense matrix, consisting of $165$ face images each of size $64 \times 64$. The flatten image vectors are centered and normalized such that the average image vector is zero, and the entries are bounded within $[-1,1]$.
    \item \texttt{MNIST} training set consists of $60,000$ images of hand-written digits from $0$ to $9$. Each image is of size $28 \times 28$. The images are flatten and normalized to form a full-rank matrix of size $N \times d$ where $N$ is the number of images and $d = 784$ is the size of the flatten images, with entries bounded in $[0,1]$. The nonzero entries take approximately $20\%$ of the matrix for both the training and the testing sets.
    \item Random \textit{sparse non-negative (SNN)} matrices are synthetic random sparse matrices used in \cite{voronin2017, sorensen2014} for testing skeleton selection algorithms. Given $s_1 \geq \dots \geq s_r > 0$, a random SNN matrix $\Ab$ of size $m \times n$ takes the form,
    \begin{equation}
        \label{eq:snn-def}
        \Ab = \text{SNN}\bpar{\cpar{s_i}_{i=1}^r;\ m,n}:= \sum_{i=1}^r s_i \xb_i \yb_i^T
    \end{equation}
    where $\xb_i \in \R^m$, $\yb_i \in \R^{n}$, $i \in [r]$ are random sparse vectors with non-negative entries.
    In the experiments, we use two random SNN matrices of distinct sizes:
    \begin{enumerate}[label=(\roman*)]
        \item \texttt{SNN1e3} is a $1000 \times 1000$ SNN matrix with $r = 1000$, $s_i = \frac{2}{i}$ for $i=1,\dots,100$, and $s_i = \frac{1}{i}$ for $i=101,\dots,1000$;
        \item \texttt{SNN1e6} is a $10^6 \times 10^6$ SNN matrix with $r = 400$, $s_i = \frac{2}{i}$ for $i=1,\dots,100$, and $s_i = \frac{1}{i}$ for $i=101,\dots,400$.
    \end{enumerate}
\end{enumerate}

Scaled with respect to the approximation ranks $k$, we compare the accuracy and efficiency of the following randomized CUR algorithms:
\begin{enumerate}
    \item Rand-\plu (and Rand-\plu-1piter):
    \Cref{algo:sketch_pivot_CUR_general} with $\Xb = \Gammab \Ab$ being a row sketch (or with one plain power iteration as in \Cref{eq:def_power_iter}), and pivoting with \plu;
    \item Rand-\pqr (and Rand-\pqr-1piter): \Cref{algo:sketch_pivot_CUR_general} with $\Xb = \Gammab \Ab$ being a row sketch (or with one power iteration as in \Cref{eq:def_power_iter}), and pivoting with \pqr (\cite{voronin2017});
    \item RSVD-DEIM: \Cref{algo:sketch_pivot_CUR_general} with $\Xb$ being an approximation of leading-$k$ right singular vectors (\Cref{eq:rsvd_procedures}), and pivoting with \plu (\cite{sorensen2014});
    \item RSVD-LS: Skeleton sampling based on approximated leverage scores (\cite{mahoney2009}) from a rank-$k$ SVD approximation (\Cref{eq:rsvd_procedures});
    \item SRCUR: Spectrum-revealing CUR decomposition proposed in \cite{chen2020}.
\end{enumerate}
The asymptotic complexities of the first three randomized pivoting based skeleton selection algorithms based on \Cref{algo:sketch_pivot_CUR_general} are summarized in \Cref{tab:complexity_rand_pivot}.
\begin{table}[!h]
    \centering
    \begin{tabular}{c|c|c}
    \hline
        Algorithm & Row space approximator construction (Line 1,2) & Pivoting (Line 3) \\
    \hline
        Rand-LUPP & $O(T_s(l,\Ab))$ & $O(n l^2)$ \\
        Rand-LUPP-1piter & $O(T_s(l,\Ab) + \nnz(\Ab) l)$ & $O(n l^2)$ \\
    \hline
        Rand-CPQR & $O(T_s(l,\Ab))$ & $O(n l^2)$ \\
        Rand-CPQR-1piter & $O(T_s(l,\Ab) + \nnz(\Ab) l)$ & $O(n l^2)$ \\
    \hline
        RSVD-DEIM & $O\rbr{T_s(l,\Ab) + (m+n)l^2 + \nnz(\Ab) l}$ & $O(nl^2)$ \\
    \hline
    \end{tabular}
    \caption{Asymptotic complexities of various randomized pivoting based skeleton selection algorithms based on \Cref{algo:sketch_pivot_CUR_general}.}
    \label{tab:complexity_rand_pivot}
\end{table}

For consistency, we use Gaussian embeddings for sketching throughout the experiments.
With the selected column and row skeletons, we leverage the stable construction in \Cref{eq:cur_stable} to form the corresponding CUR decompositions $\cur{\Ab}{I_s,J_s}$.
Although oversampling ($\ie$, $l > k$) is necessary for multiplicative error bounds with respect to the optimal rank-$k$ approximation error (\Cref{eq:rand_rangefinder_error_bound}, \Cref{thm:pivoting_on_rangeapprox_error_bound}), since oversampling can be interpreted as a shift of curves along the axis of the approximation rank, for the comparison purpose, we simply treat $l=k$, and compare the rank-$k$ approximation errors of the CUR decompositions against the optimal rank-$k$ approximation error $\norm{\Ab-\Ab_k}$.

\begin{figure}[!ht]
    \centering
    \begin{subfigure}{0.32\textwidth}
    \centering
    \includegraphics[width=\linewidth]{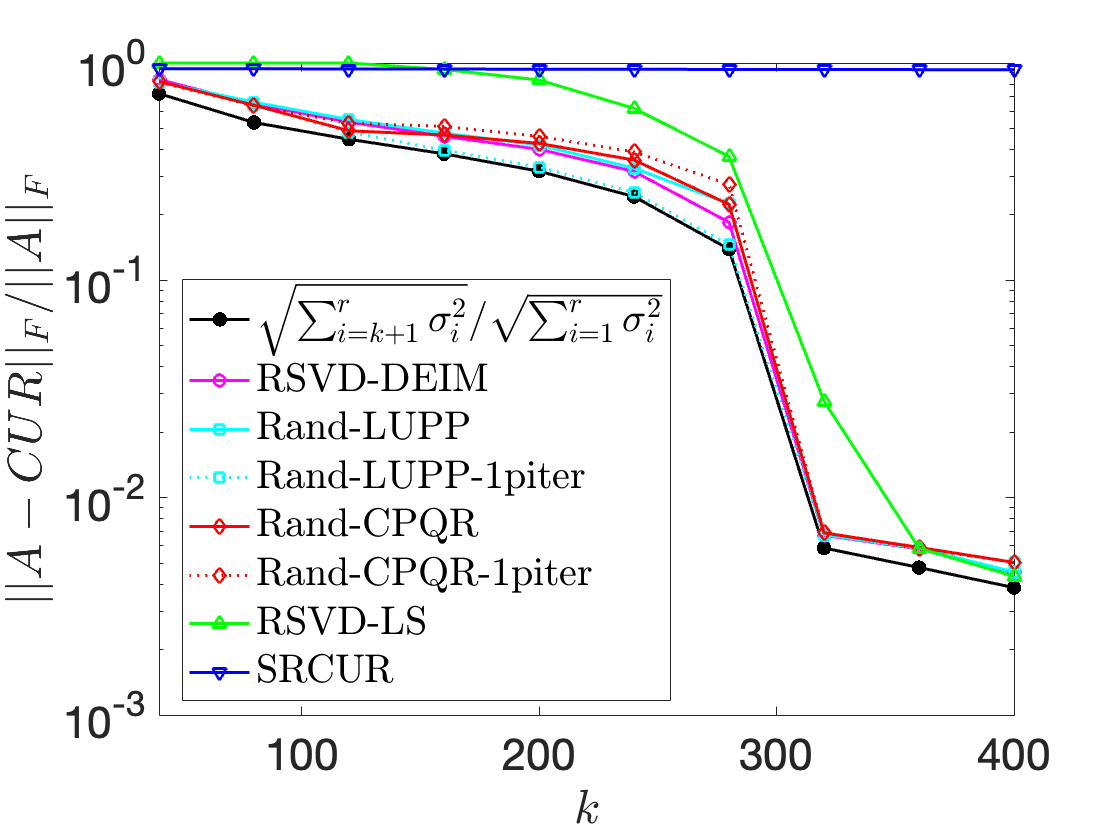}
    \caption{Frobenius norm error.}
    \label{fig:rand-errfro-rank_large}
    \end{subfigure}
    \begin{subfigure}{0.32\textwidth}
    \centering
    \includegraphics[width=\linewidth]{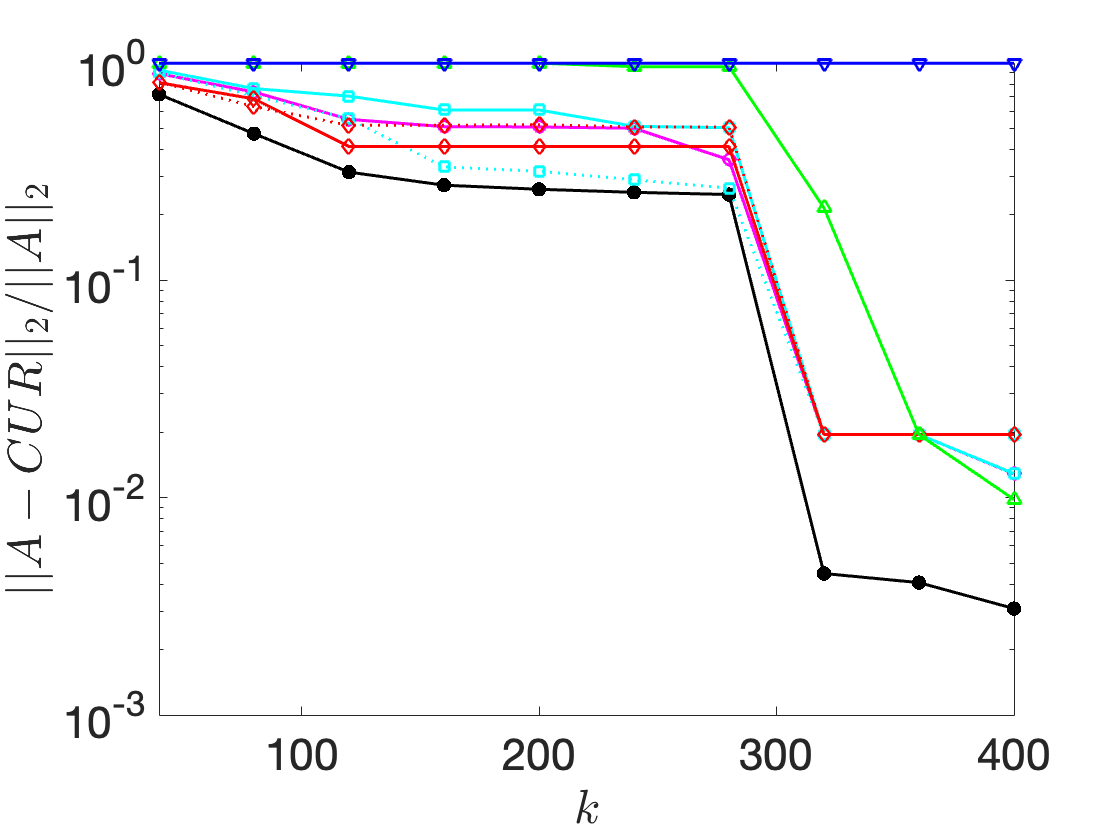}
    \caption{Spectral norm error.}
    \label{fig:rand-err2-rank_large}
    \end{subfigure}
    \begin{subfigure}{0.32\textwidth}
    \centering
    \includegraphics[width=\linewidth]{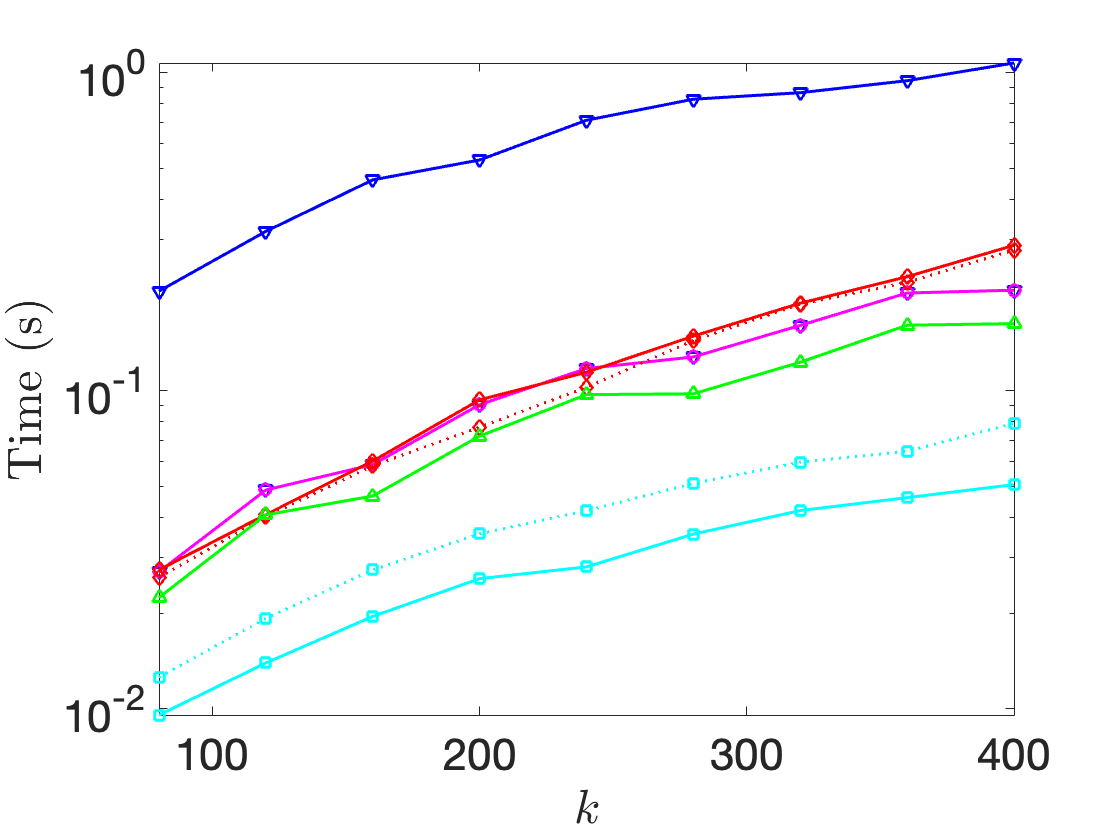}
    \caption{Runtime.}
    \label{fig:rand-time-rank_large}
    \end{subfigure}
    \caption{Relative error and run time of randomized skeleton selection on the \texttt{large} data set.}
    \label{fig:rand-err-rank_large}
\end{figure}

\begin{figure}[!ht]
    \centering
    \begin{subfigure}{0.32\textwidth}
    \centering
    \includegraphics[width=\linewidth]{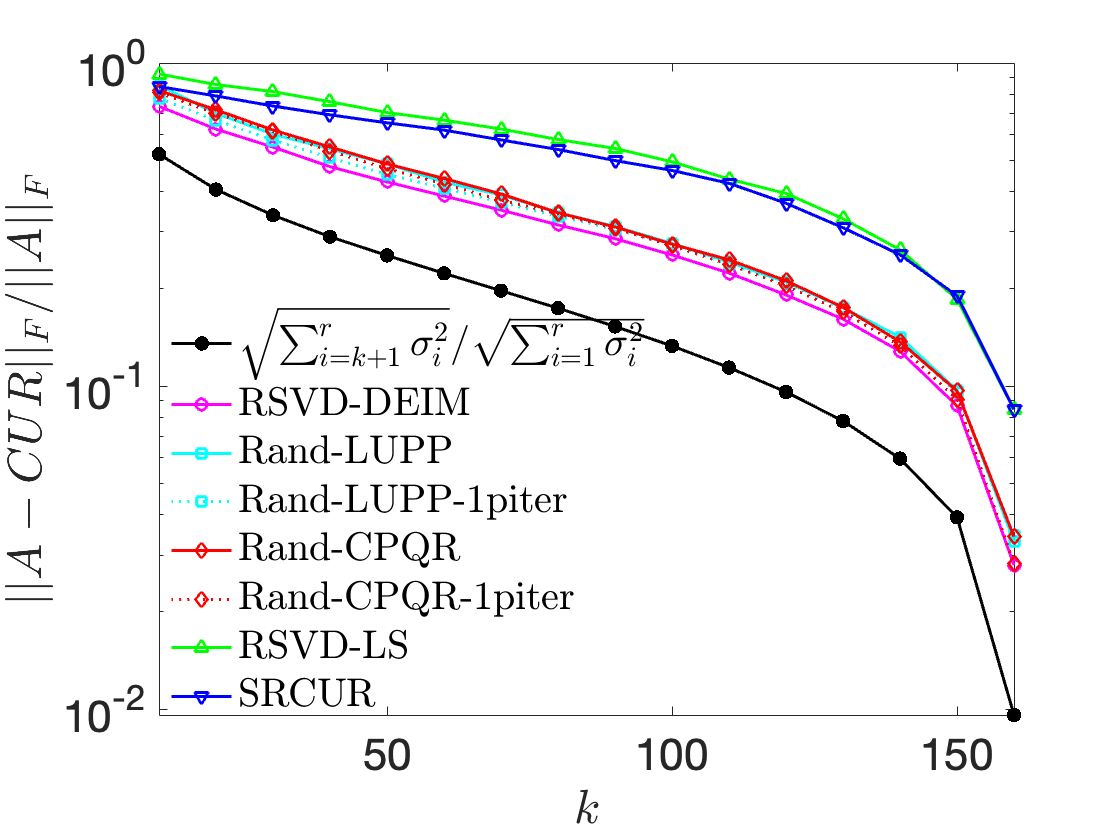}
    \caption{Frobenius norm error.}
    \label{fig:rand-errfro-rank_yaleface-64x64}
    \end{subfigure}
    \begin{subfigure}{0.32\textwidth}
    \centering
    \includegraphics[width=\linewidth]{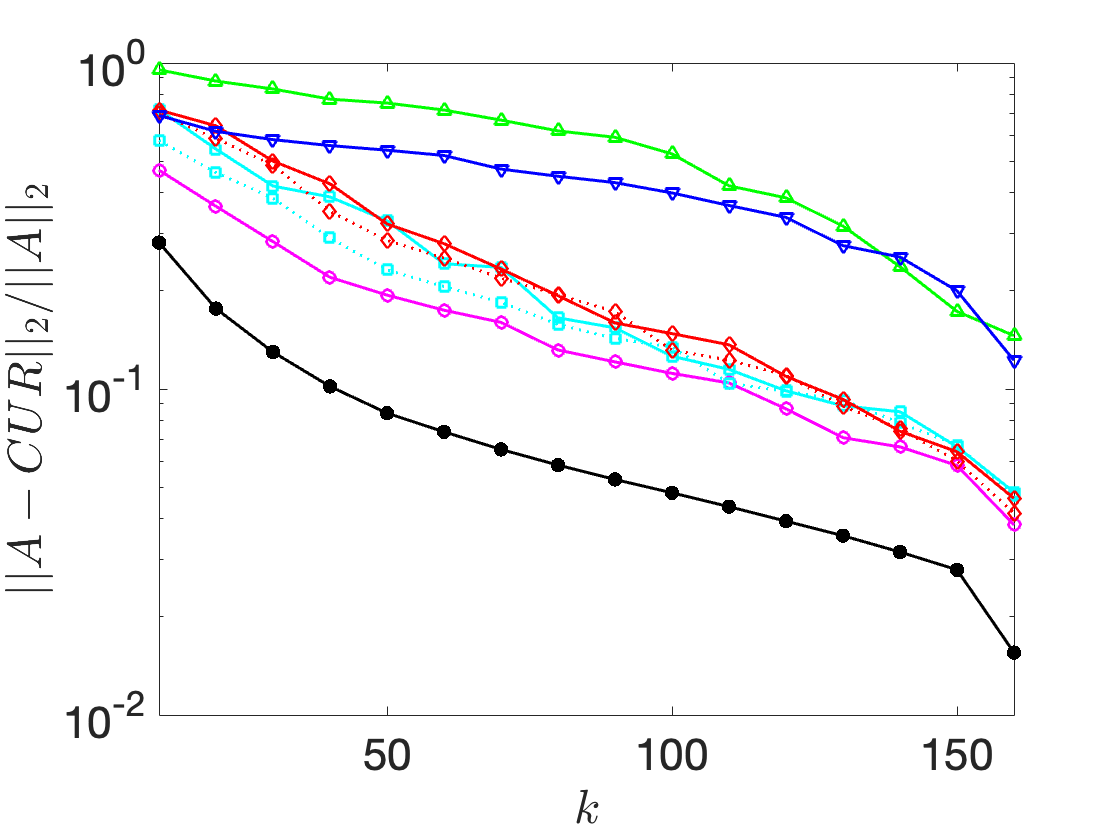}
    \caption{Spectral norm error.}
    \label{fig:rand-err2-rank_yaleface-64x64}
    \end{subfigure}
    \begin{subfigure}{0.32\textwidth}
    \centering
    \includegraphics[width=\linewidth]{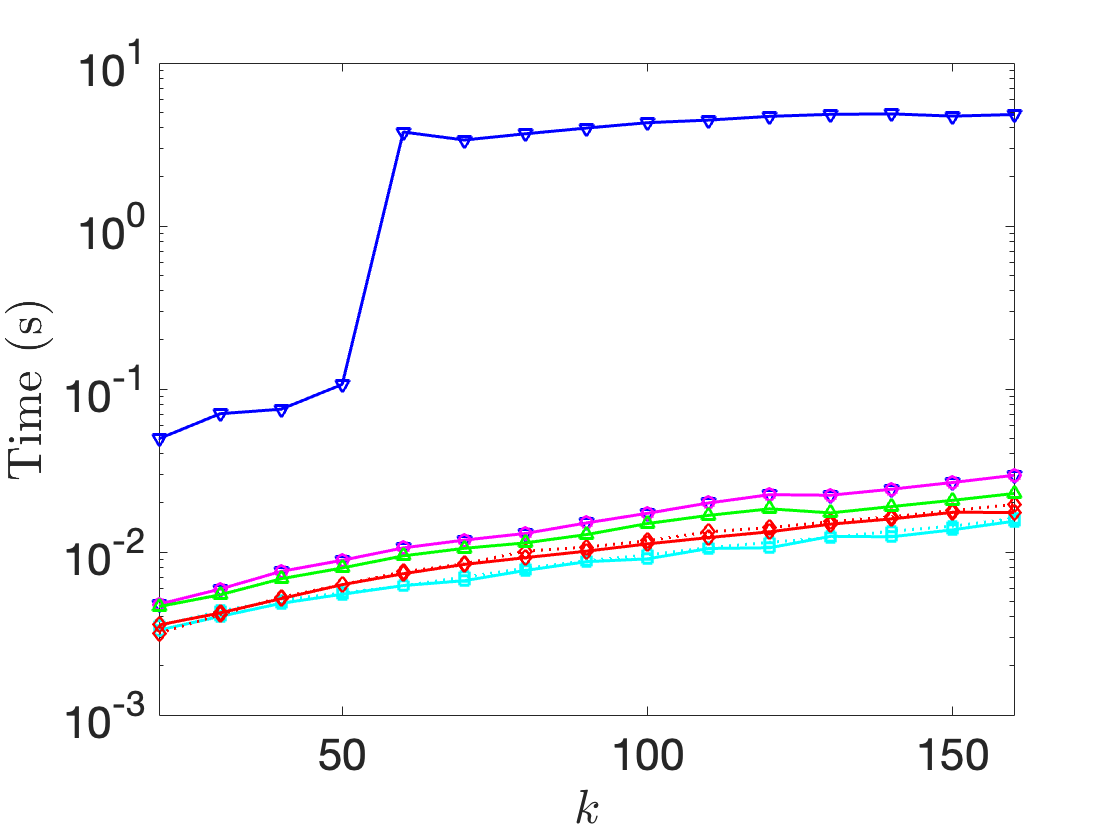}
    \caption{Runtime.}
    \label{fig:rand-time-rank_yaleface-64x64}
    \end{subfigure}
    \caption{Relative error and run time of randomized skeleton selection on the \texttt{YaleFace64x64} data set.}
    \label{fig:rand-err-rank_yaleface-64x64}
\end{figure}

\begin{figure}[!ht]
    \centering
    
    \begin{subfigure}{0.32\textwidth}
    \centering
    \includegraphics[width=\linewidth]{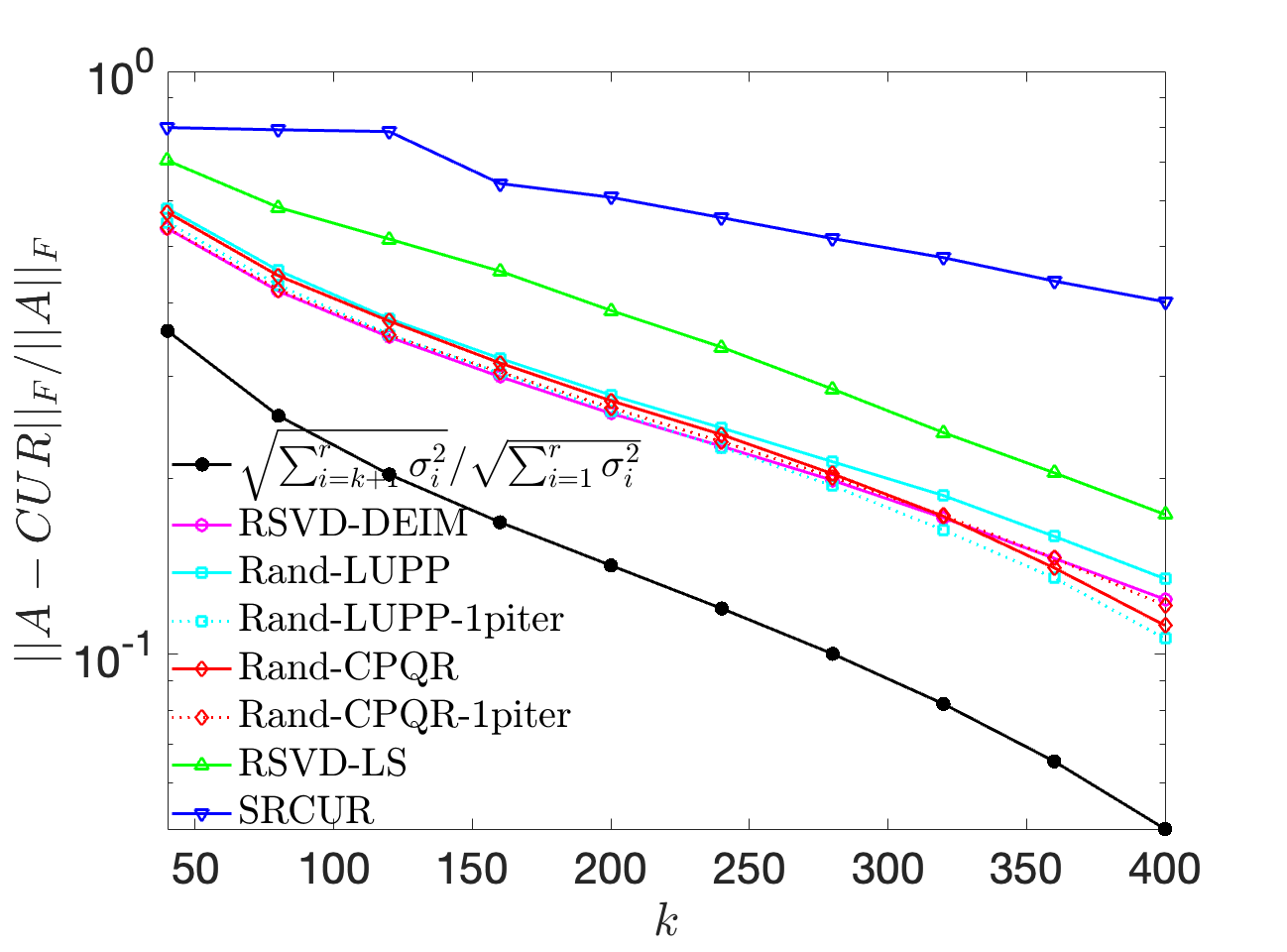}
    \caption{Frobenius norm error.}
    \label{fig:rand-errfro-rank_mnist-train}
    \end{subfigure}
    \begin{subfigure}{0.32\textwidth}
    \centering
    \includegraphics[width=\linewidth]{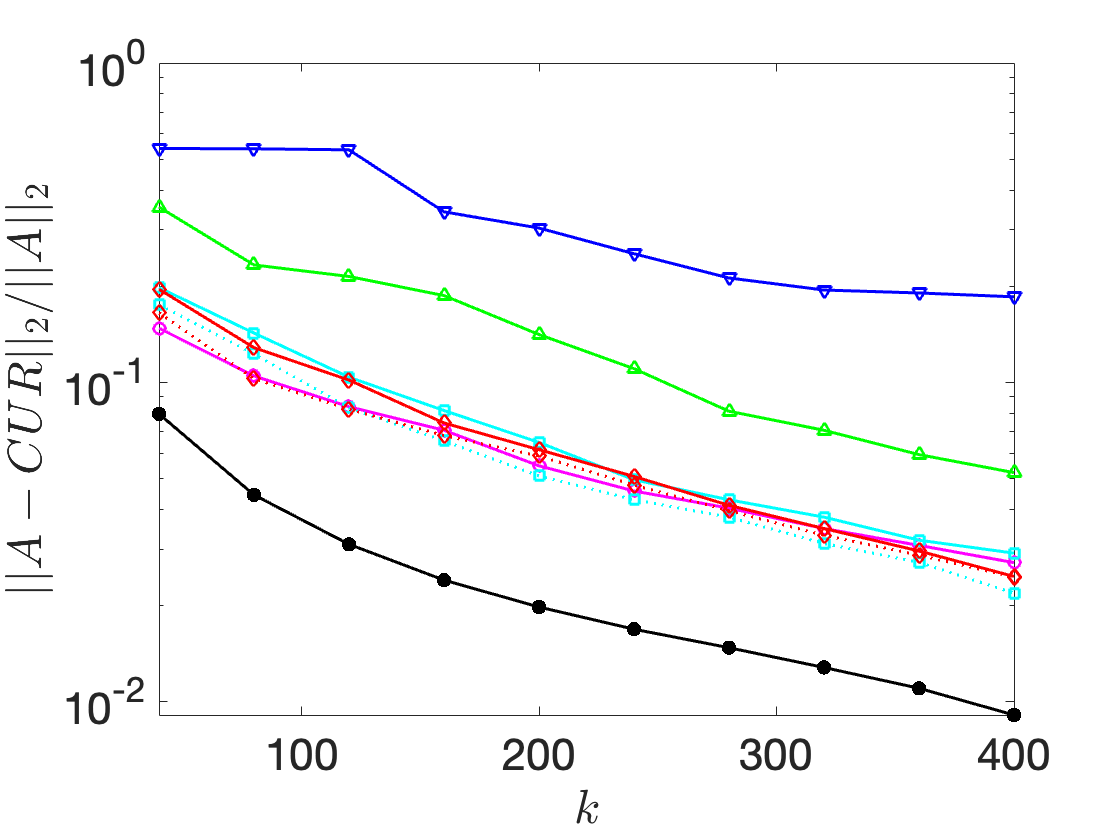}
    \caption{Spectral norm error.}
    \label{fig:rand-err2-rank_mnist-train}
    \end{subfigure}
    \begin{subfigure}{0.32\textwidth}
    \centering
    \includegraphics[width=\linewidth]{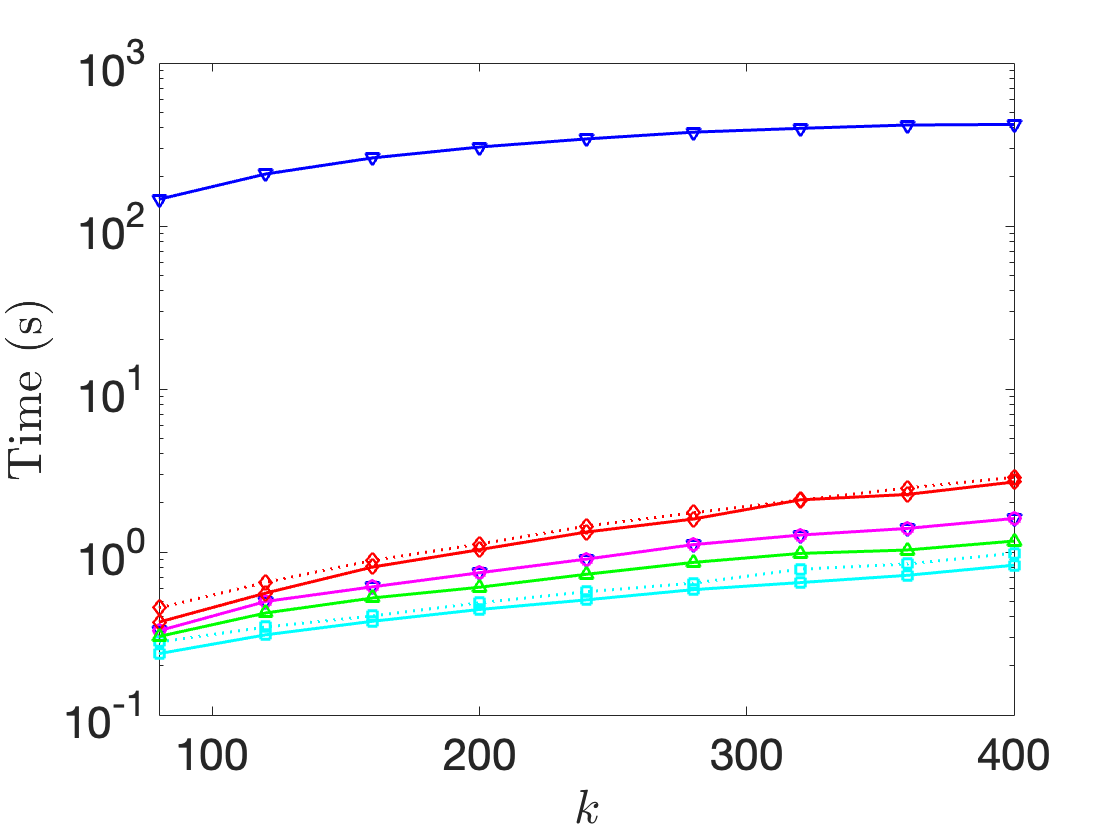}
    \caption{Runtime.}
    \label{fig:rand-time-rank_mnist-train}
    \end{subfigure}
    
    \caption{Relative error and run time of randomized skeleton selection on the training set of MNIST.}
    \label{fig:rand-err-rank_mnist-train}
\end{figure}

\begin{figure}[!ht]
    \centering
    
    \begin{subfigure}{0.32\textwidth}
    \centering
    \includegraphics[width=\linewidth]{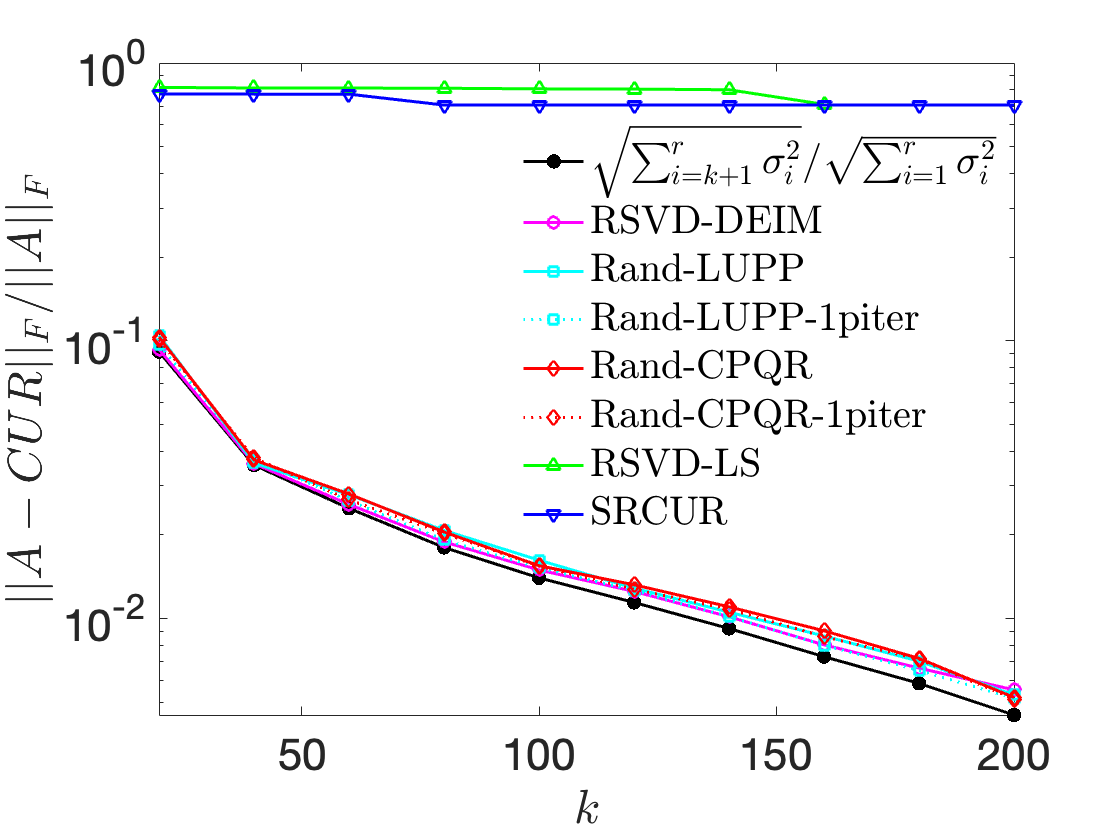}
    \caption{Frobenius norm error.}
    \label{fig:rand-errfro-rank_snn-1e3-1e3_a2b1_k100_r1e3_s1e-3}
    \end{subfigure}
    \begin{subfigure}{0.32\textwidth}
    \centering
    \includegraphics[width=\linewidth]{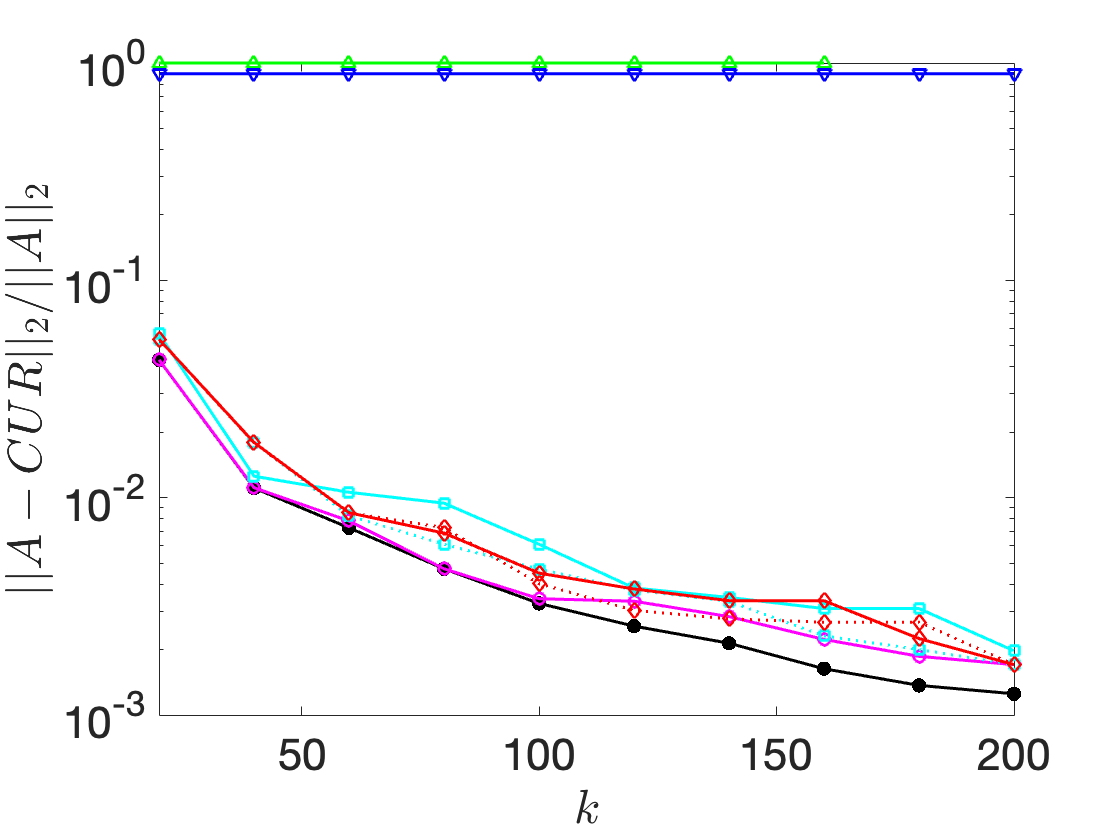}
    \caption{Spectral norm error.}
    \label{fig:rand-err2-rank_snn-1e3-1e3_a2b1_k100_r1e3_s1e-3}
    \end{subfigure}
    \begin{subfigure}{0.32\textwidth}
    \centering
    \includegraphics[width=\linewidth]{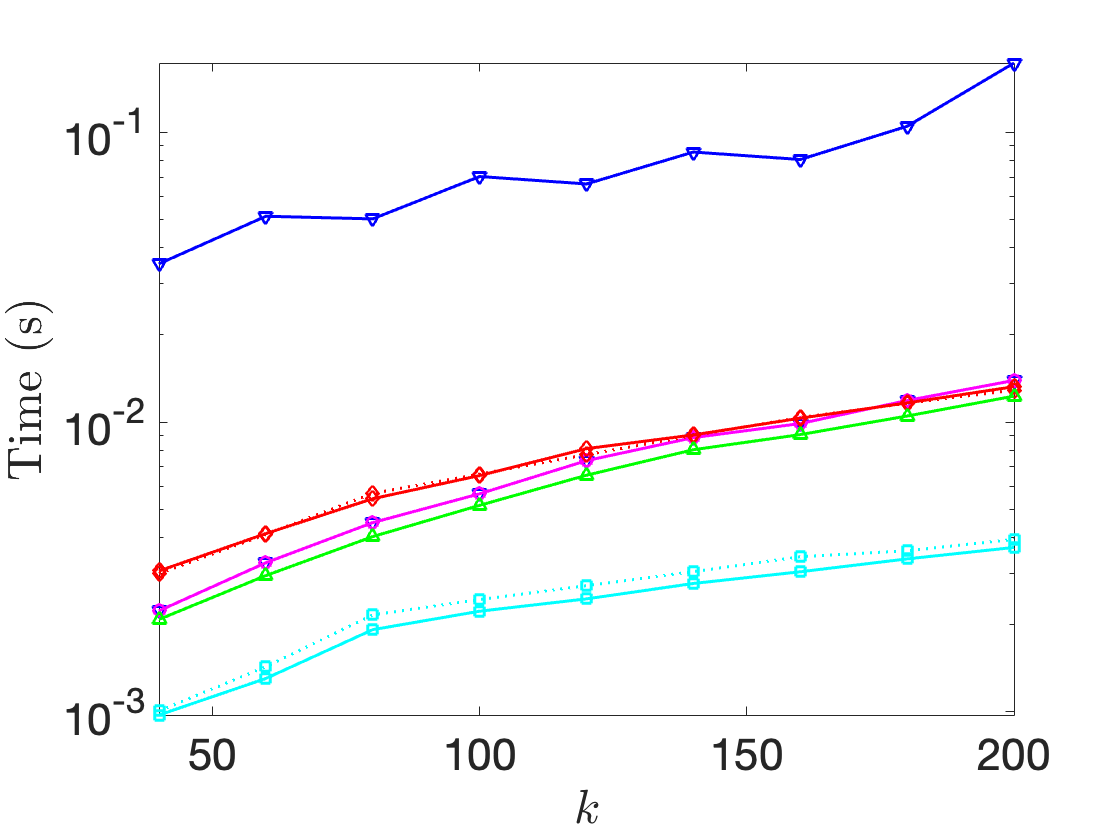}
    \caption{Runtime.}
    \label{fig:rand-time-rank_snn-1e3-1e3_a2b1_k100_r1e3_s1e-3}
    \end{subfigure}
    
    \caption{Relative error and run time of randomized skeleton selection on a $1000 \times 1000$ sparse non-negative random matrix, \texttt{SNN1e3}.}
    \label{fig:rand-err-rank_snn-1e3-1e3_a2b1_k100_r1e3_s1e-3}
\end{figure}

\begin{figure}[!ht]
    \centering
    
    \begin{subfigure}{0.32\textwidth}
    \centering
    \includegraphics[width=\linewidth]{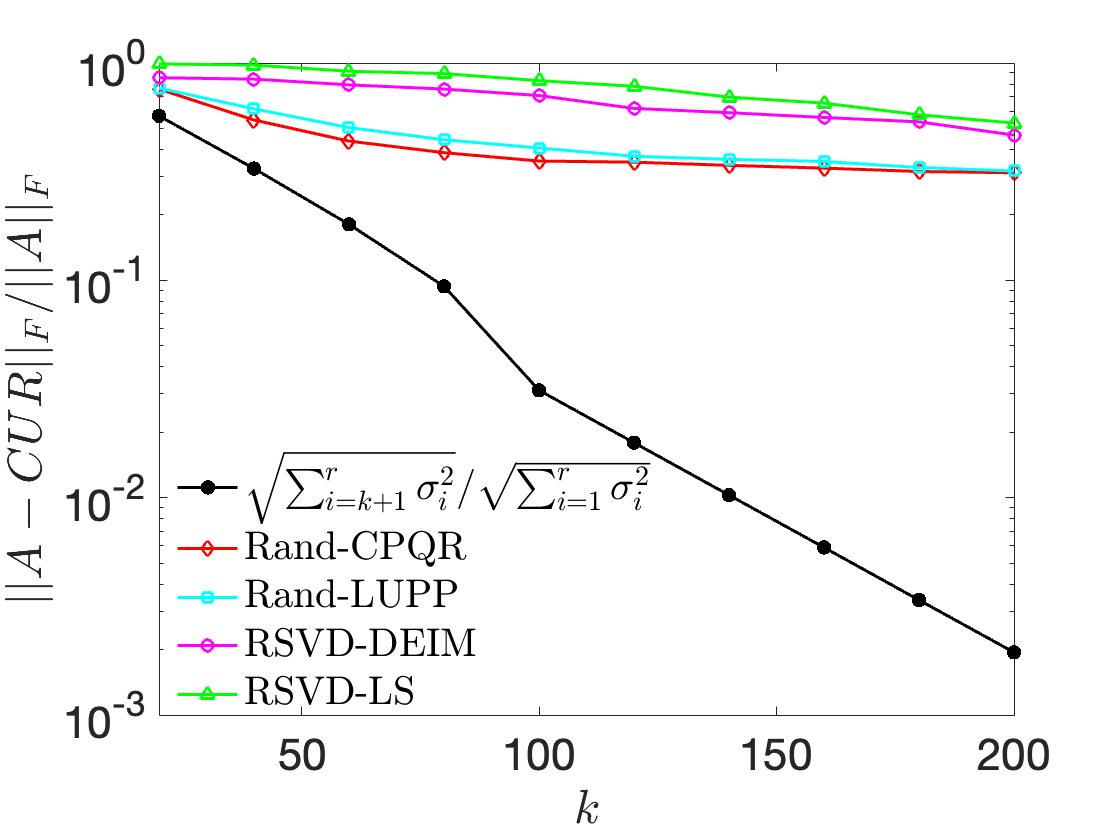}
    \caption{Frobenius norm error.}
    \label{fig:and-errref-rank_snn-1e6-1e6-a2b1-k100-r400-s2or}
    \end{subfigure}
    \begin{subfigure}{0.32\textwidth}
    \centering
    \includegraphics[width=\linewidth]{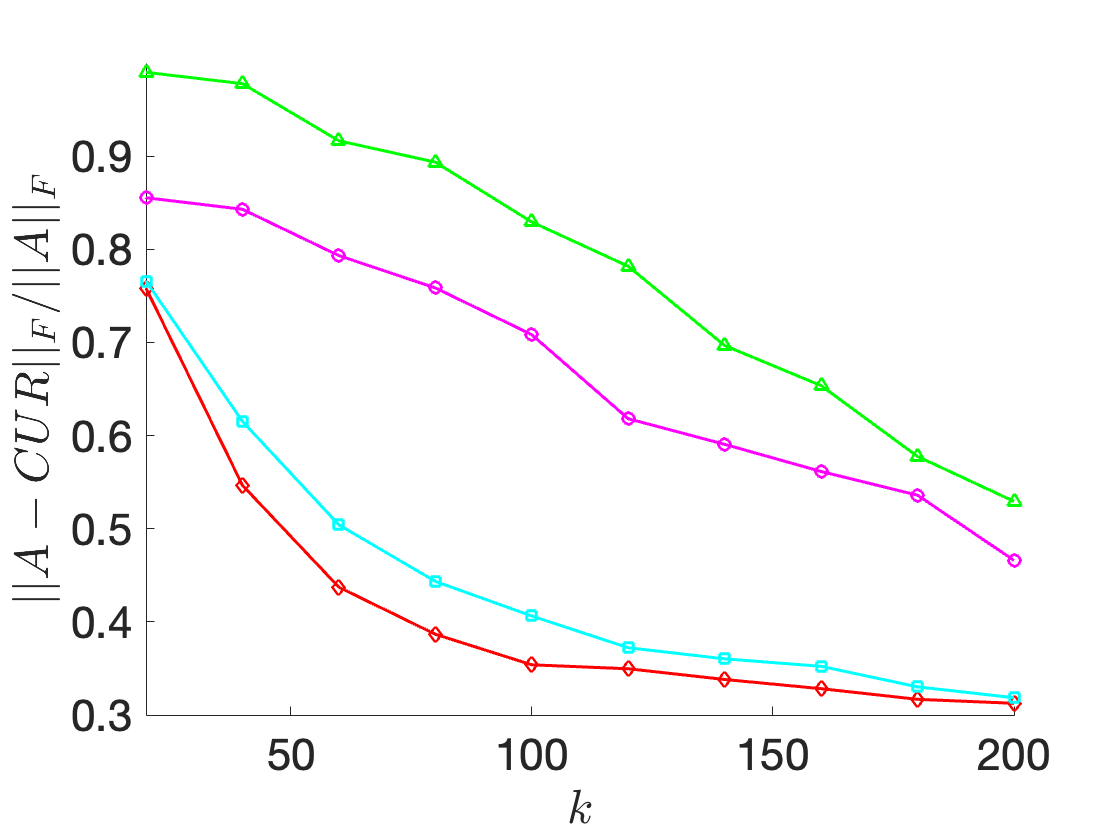}
    \caption{Frobenius norm error zoomed.}
    \label{fig:rand-errfro-rank_snn-1e6-1e6-a2b1-k100-r400-s2or}
    \end{subfigure}
    \begin{subfigure}{0.32\textwidth}
    \centering
    \includegraphics[width=\linewidth]{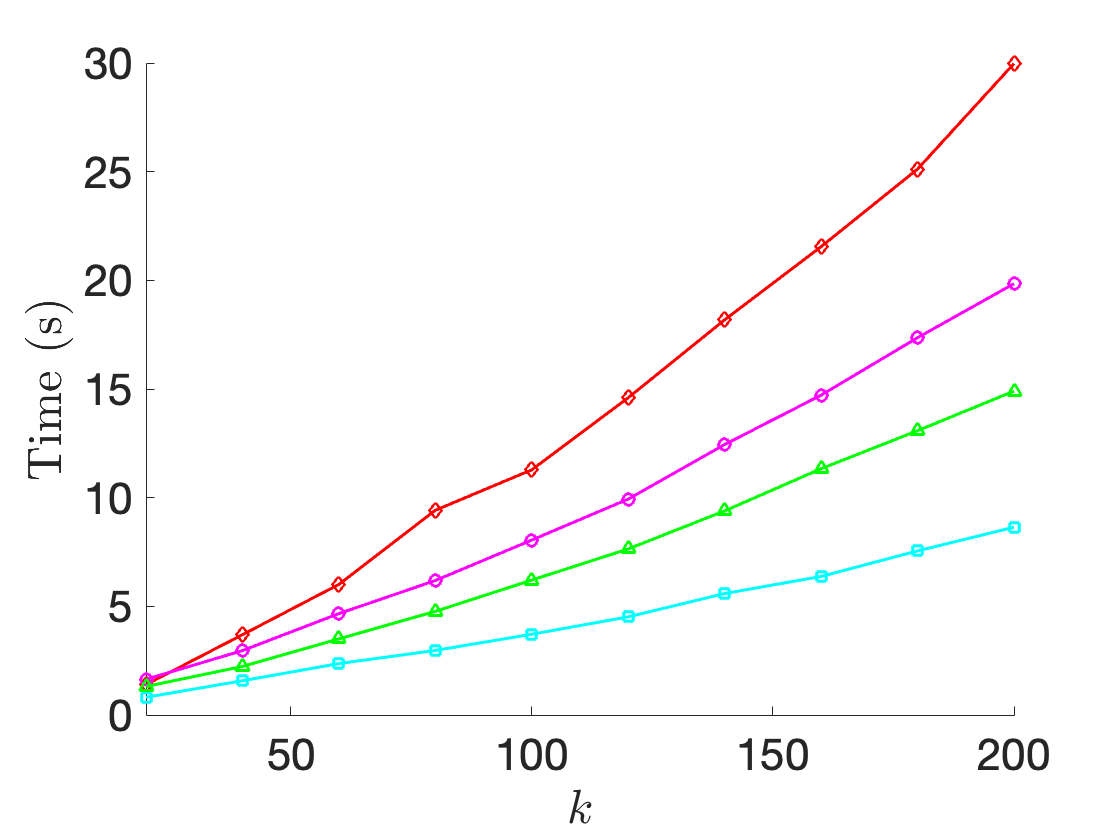}
    \caption{Runtime.}
    \label{fig:rand-time-rank_snn-1e6-1e6-a2b1-k100-r400-s2or}
    \end{subfigure}
    
    \caption{Relative error and run time of randomized skeleton selection on a $10^6 \times 10^6$ sparse non-negative random matrix, \texttt{SNN1e6}.}
    \label{fig:rand-err-rank_snn-1e6-1e6-a2b1-k100-r400-s2or}
\end{figure}

From Figure \ref{fig:rand-err-rank_large}-\ref{fig:rand-err-rank_snn-1e3-1e3_a2b1_k100_r1e3_s1e-3}, we observe that the randomized pivoting based skeleton selection algorithms that fall in \Cref{algo:sketch_pivot_CUR_general} ($\ie$, Rand-\plu, Rand-\pqr, and RSVD-DEIM) share the similar approximation accuracy, which is considerably lower than the RSVD-LS and SRCUR. From the efficiency perspective, Rand-\plu provides the most competitive run time among all the algorithms, especially when $\Ab$ is sparse. Meanwhile, we observe that, for both Rand-\pqr and Rand-\plu, constructing the sketches with one plain power iteration ($\ie$, with \Cref{eq:def_power_iter}) can observably improve the accuracy, without sacrificing the efficiency significantly ($\eg$, in comparison to the randomized DEIM which involves one power iteration with orthogonalization as in \Cref{eq:ortho_power_iter}).
In Figure \ref{fig:rand-err-rank_snn-1e6-1e6-a2b1-k100-r400-s2or}, the similar performance is also observed on a synthetic large-scale problem, \texttt{SNN1e6}, where the matrix is only accessible as a fast matrix-vector multiplication (matvec) oracle such that each matvec takes $o(mn)$ (i.e., $O((m+n)r)$ in our construction) operations.


\subsection*{Acknowledgments:}
The work reported was supported by the Office of Naval Research (N00014-18-1-2354),
by the National Science Foundation (DMS-1952735 and DMS-2012606),
and by the Department of Energy ASCR (DE-SC0022251).
The authors wish to thank Chao Chen, Ke Chen, Yuji Nakatsukasa, and Rachel Ward for valuable discussions.

\bibliographystyle{acm}
\setcitestyle{number}
\bibliography{reference}

\begin{thebibliography}{10}

\bibitem{aizenbud2016randomized}
{\sc Aizenbud, Y., Shabat, G., and Averbuch, A.}
\newblock Randomized lu decomposition using sparse projections.
\newblock {\em Computers \& Mathematics with Applications 72}, 9 (2016),
  2525--2534.

\bibitem{anderson2015spectral}
{\sc Anderson, D., Du, S., Mahoney, M., Melgaard, C., Wu, K., and Gu, M.}
\newblock {Spectral Gap Error Bounds for Improving CUR Matrix Decomposition and
  the Nystr\"{o}m Method}.
\newblock In {\em Proceedings of the Eighteenth International Conference on
  Artificial Intelligence and Statistics\/} (San Diego, California, USA, 09--12
  May 2015), G.~Lebanon and S.~V.~N. Vishwanathan, Eds., vol.~38 of {\em
  Proceedings of Machine Learning Research}, PMLR, pp.~19--27.

\bibitem{anderson2017}
{\sc Anderson, D., and Gu, M.}
\newblock An efficient, sparsity-preserving, online algorithm for low-rank
  approximation.
\newblock In {\em Proceedings of the 34th International Conference on Machine
  Learning - Volume 70\/} (2017), ICML'17, JMLR.org, p.~156–165.

\bibitem{batson2008}
{\sc Batson, J.~D., Spielman, D.~A., and Srivastava, N.}
\newblock Twice-ramanujan sparsifiers.
\newblock In {\em Proceedings of the Forty-First Annual ACM Symposium on Theory
  of Computing\/} (New York, NY, USA, 2009), STOC '09, Association for
  Computing Machinery, p.~255–262.

\bibitem{Bebendorf2000ApproximationOB}
{\sc Bebendorf, M.}
\newblock Approximation of boundary element matrices.
\newblock {\em Numerische Mathematik 86\/} (2000), 565--589.

\bibitem{bien2010cur}
{\sc Bien, J., Xu, Y., and Mahoney, M.}
\newblock Cur from a sparse optimization viewpoint.
\newblock {\em Annual Advances in Neural Information Processing Systems 24:
  Proceedings of the 2010 Conference\/} (11 2010).

\bibitem{boutsidis2014}
{\sc {Boutsidis}, C., {Drineas}, P., and {Magdon-Ismail}, M.}
\newblock Near optimal column-based matrix reconstruction.
\newblock In {\em 2011 IEEE 52nd Annual Symposium on Foundations of Computer
  Science\/} (2011), pp.~305--314.

\bibitem{boutsidis2013srtt}
{\sc Boutsidis, C., and Gittens, A.}
\newblock Improved matrix algorithms via the subsampled randomized hadamard
  transform.
\newblock {\em SIAM Journal on Matrix Analysis and Applications 34}, 3 (2013),
  1301--1340.

\bibitem{woodruff2014optimalCUR}
{\sc Boutsidis, C., and Woodruff, D.~P.}
\newblock Optimal cur matrix decompositions.
\newblock In {\em Proceedings of the Forty-Sixth Annual ACM Symposium on Theory
  of Computing\/} (New York, NY, USA, 2014), STOC '14, Association for
  Computing Machinery, p.~353–362.

\bibitem{chan1987}
{\sc Chan, T.~F.}
\newblock Rank revealing qr factorizations.
\newblock {\em Linear Algebra and its Applications 88-89\/} (1987), 67--82.

\bibitem{chen2020}
{\sc Chen, C., Gu, M., Zhang, Z., Zhang, W., and Yu, Y.}
\newblock Efficient spectrum-revealing cur matrix decomposition.
\newblock In {\em Proceedings of the Twenty Third International Conference on
  Artificial Intelligence and Statistics\/} (26--28 Aug 2020), S.~Chiappa and
  R.~Calandra, Eds., vol.~108 of {\em Proceedings of Machine Learning
  Research}, PMLR, pp.~766--775.

\bibitem{clarkson2017}
{\sc Clarkson, K.~L., and Woodruff, D.~P.}
\newblock Low-rank approximation and regression in input sparsity time.
\newblock {\em J. ACM 63}, 6 (Jan. 2017).

\bibitem{cohen2014}
{\sc Cohen, M.~B., Lee, Y.~T., Musco, C., Musco, C., Peng, R., and Sidford, A.}
\newblock Uniform sampling for matrix approximation.
\newblock In {\em Proceedings of the 2015 Conference on Innovations in
  Theoretical Computer Science\/} (New York, NY, USA, 2015), ITCS '15,
  Association for Computing Machinery, p.~181–190.

\bibitem{cortinovis2019lowrank}
{\sc Cortinovis, A., and Kressner, D.}
\newblock Low-rank approximation in the frobenius norm by column and row subset
  selection, 2019.

\bibitem{derezinski2020improved}
{\sc Derezinski, M., Khanna, R., and Mahoney, M.~W.}
\newblock Improved guarantees and a multiple-descent curve for the column
  subset selection problem and the nystr{\"o}m method.
\newblock {\em ArXiv abs/2002.09073\/} (2020).

\bibitem{derezinski2020dpp}
{\sc Dereziński, M., and Mahoney, M.}
\newblock Determinantal point processes in randomized numerical linear algebra.
\newblock {\em Notices of the American Mathematical Society 68\/} (01 2021), 1.

\bibitem{deshpande2010}
{\sc Deshpande, A., and Rademacher, L.}
\newblock Efficient volume sampling for row/column subset selection.
\newblock In {\em 2010 IEEE 51st Annual Symposium on Foundations of Computer
  Science\/} (2010), pp.~329--338.

\bibitem{deshpande2006}
{\sc Deshpande, A., Rademacher, L., Vempala, S., and Wang, G.}
\newblock Matrix approximation and projective clustering via volume sampling.
\newblock {\em Theory of Computing 2}, 12 (2006), 225--247.

\bibitem{drineas2012}
{\sc Drineas, P., Magdon-Ismail, M., Mahoney, M.~W., and Woodruff, D.~P.}
\newblock Fast approximation of matrix coherence and statistical leverage.
\newblock {\em J. Mach. Learn. Res. 13}, 1 (Dec. 2012), 3475–3506.

\bibitem{drineas2008relative}
{\sc Drineas, P., Mahoney, M.~W., and Muthukrishnan, S.}
\newblock Relative-error \$cur\$ matrix decompositions.
\newblock {\em SIAM Journal on Matrix Analysis and Applications 30}, 2 (2008),
  844--881.

\bibitem{drmac2016qdeim}
{\sc Drmač, Z., and Gugercin, S.}
\newblock A new selection operator for the discrete empirical interpolation
  method---improved a priori error bound and extensions.
\newblock {\em SIAM Journal on Scientific Computing 38}, 2 (2016), A631--A648.

\bibitem{eckart1936}
{\sc Eckart, C., and Young, G.}
\newblock The approximation of one matrix by another of lower rank.
\newblock {\em Psychometrika 1}, 3 (Sep 1936), 211--218.

\bibitem{geist1988lu}
{\sc Geist, G.~A., and Romine, C.~H.}
\newblock \$lu\$ factorization algorithms on distributed-memory multiprocessor
  architectures.
\newblock {\em SIAM Journal on Scientific and Statistical Computing 9}, 4
  (1988), 639--649.

\bibitem{golub2013}
{\sc Golub, G.~H., and Van~Loan, C.~F.}
\newblock {\em Matrix Computations}, third~ed.
\newblock The Johns Hopkins University Press, 1996.

\bibitem{goreinov1997}
{\sc Goreinov, S., Tyrtyshnikov, E., and Zamarashkin, N.}
\newblock A theory of pseudoskeleton approximations.
\newblock {\em Linear Algebra and its Applications 261}, 1 (1997), 1--21.

\bibitem{grigori2011calu}
{\sc Grigori, L., Demmel, J.~W., and Xiang, H.}
\newblock Calu: A communication optimal lu factorization algorithm.
\newblock {\em SIAM Journal on Matrix Analysis and Applications 32}, 4 (2011),
  1317--1350.

\bibitem{gu1996}
{\sc Gu, M., and Eisenstat, S.~C.}
\newblock Efficient algorithms for computing a strong rank-revealing qr
  factorization.
\newblock {\em SIAM Journal on Scientific Computing 17}, 4 (1996), 848--869.

\bibitem{halko2011}
{\sc Halko, N., Martinsson, P.~G., and Tropp, J.~A.}
\newblock Finding structure with randomness: Probabilistic algorithms for
  constructing approximate matrix decompositions.
\newblock {\em SIAM Review 53}, 2 (2011), 217--288.

\bibitem{lowrankjl2020}
{\sc Ho, K., Olver, S., Kelman, T., Jarlebring, E., TagBot, J., and Slevinsky,
  M.}
\newblock Lowrankapprox,jl: v0.4.3.
\newblock \url{https://github.com/JuliaMatrices/LowRankApprox.jl}, 2020.

\bibitem{hong1992}
{\sc Hong, Y.~P., and Pan, C.-T.}
\newblock Rank-revealing qr factorizations and the singular value
  decomposition.
\newblock {\em Mathematics of Computation 58}, 197 (1992), 213--232.

\bibitem{householder1958hhqr}
{\sc Householder, A.~S.}
\newblock Unitary triangularization of a nonsymmetric matrix.
\newblock {\em J. ACM 5}, 4 (Oct. 1958), 339–342.

\bibitem{indyk1998}
{\sc Indyk, P., and Motwani, R.}
\newblock Approximate nearest neighbors: Towards removing the curse of
  dimensionality.
\newblock In {\em Proceedings of the Thirtieth Annual ACM Symposium on Theory
  of Computing\/} (New York, NY, USA, 1998), STOC '98, Association for
  Computing Machinery, p.~604–613.

\bibitem{kahan1966}
{\sc Kahan, W.}
\newblock Numerical linear algebra.
\newblock {\em Canadian Mathematical Bulletin 9}, 5 (1966), 757–801.

\bibitem{zhao2005}
{\sc {Kezhong Zhao}, {Vouvakis}, M.~N., and {Jin-Fa Lee}}.
\newblock The adaptive cross approximation algorithm for accelerated method of
  moments computations of emc problems.
\newblock {\em IEEE Transactions on Electromagnetic Compatibility 47}, 4
  (2005), 763--773.

\bibitem{kurzak2007}
{\sc {Kurzak}, J., {Luszczek}, P., {Faverge}, M., and {Dongarra}, J.}
\newblock Lu factorization with partial pivoting for a multicore system with
  accelerators.
\newblock {\em IEEE Transactions on Parallel and Distributed Systems 24}, 8
  (2013), 1613--1621.

\bibitem{Liberty20167}
{\sc Liberty, E., Woolfe, F., Martinsson, P.-G., Rokhlin, V., and Tygert, M.}
\newblock Randomized algorithms for the low-rank approximation of matrices.
\newblock {\em Proceedings of the National Academy of Sciences 104}, 51 (2007),
  20167--20172.

\bibitem{mahoney2009}
{\sc Mahoney, M.~W., and Drineas, P.}
\newblock Cur matrix decompositions for improved data analysis.
\newblock {\em Proceedings of the National Academy of Sciences 106}, 3 (2009),
  697--702.

\bibitem{martinsson2020}
{\sc Martinsson, P.-G., and Tropp, J.~A.}
\newblock Randomized numerical linear algebra: Foundations and algorithms.
\newblock {\em Acta Numerica 29\/} (2020), 403–572.

\bibitem{meng2013}
{\sc Meng, X., and Mahoney, M.~W.}
\newblock Low-distortion subspace embeddings in input-sparsity time and
  applications to robust linear regression.
\newblock In {\em Proceedings of the Forty-Fifth Annual ACM Symposium on Theory
  of Computing\/} (New York, NY, USA, 2013), STOC '13, Association for
  Computing Machinery, p.~91–100.

\bibitem{large}
{\sc Meszaros, C.}
\newblock Meszaros/large, lp sequence: large000 to large036.
\newblock \url{http://old.sztaki.hu/~meszaros/public_ftp/lptestset/}, 2004.

\bibitem{gu2003srrlu}
{\sc Miranian, L., and Gu, M.}
\newblock Strong rank revealing lu factorizations.
\newblock {\em Linear Algebra and its Applications 367\/} (07 2003), 1--16.

\bibitem{musco2015randomized}
{\sc Musco, C., and Musco, C.}
\newblock Randomized block krylov methods for stronger and faster approximate
  singular value decomposition.
\newblock In {\em Proceedings of the 28th International Conference on Neural
  Information Processing Systems - Volume 1\/} (Cambridge, MA, USA, 2015),
  NIPS'15, MIT Press, p.~1396–1404.

\bibitem{nelson2013}
{\sc {Nelson}, J., and {Nguyên}, H.~L.}
\newblock Osnap: Faster numerical linear algebra algorithms via sparser
  subspace embeddings.
\newblock In {\em 2013 IEEE 54th Annual Symposium on Foundations of Computer
  Science\/} (2013), pp.~117--126.

\bibitem{pan2000rrlu}
{\sc Pan, C.-T.}
\newblock On the existence and computation of rank-revealing lu factorizations.
\newblock {\em Linear Algebra and its Applications 316}, 1 (2000), 199--222.
\newblock Special Issue: Conference celebrating the 60th birthday of Robert J.
  Plemmons.

\bibitem{pan2015rand}
{\sc Pan, V.~Y., Qian, G., and Yan, X.}
\newblock Random multipliers numerically stabilize gaussian and block gaussian
  elimination: Proofs and an extension to low-rank approximation.
\newblock {\em Linear Algebra and its Applications 481\/} (2015), 202--234.

\bibitem{pan2017num}
{\sc Pan, V.~Y., and Zhao, L.}
\newblock Numerically safe gaussian elimination with no pivoting.
\newblock {\em Linear Algebra and its Applications 527\/} (2017), 349--383.

\bibitem{peters1975}
{\sc Peters, G., and Wilkinson, J.~H.}
\newblock On the stability of gauss-jordan elimination with pivoting.
\newblock {\em Commun. ACM 18}, 1 (Jan. 1975), 20–24.

\bibitem{rokhlin2008}
{\sc Rokhlin, V., and Tygert, M.}
\newblock A fast randomized algorithm for overdetermined linear least-squares
  regression.
\newblock {\em Proceedings of the National Academy of Sciences 105}, 36 (2008),
  13212--13217.

\bibitem{shabat2018randomized}
{\sc Shabat, G., Shmueli, Y., Aizenbud, Y., and Averbuch, A.}
\newblock Randomized lu decomposition.
\newblock {\em Applied and Computational Harmonic Analysis 44}, 2 (2018),
  246--272.

\bibitem{solomonik2011communication}
{\sc Solomonik, E., and Demmel, J.}
\newblock Communication-optimal parallel 2.5d matrix multiplication and lu
  factorization algorithms.
\newblock In {\em Euro-Par 2011 Parallel Processing\/} (Berlin, Heidelberg,
  2011), E.~Jeannot, R.~Namyst, and J.~Roman, Eds., Springer Berlin Heidelberg,
  pp.~90--109.

\bibitem{sorensen2014}
{\sc Sorensen, D.~C., and Embree, M.}
\newblock A deim induced cur factorization.
\newblock {\em SIAM Journal on Scientific Computing 38}, 3 (2016),
  A1454--A1482.

\bibitem{trefethen1997}
{\sc Trefethen, L.~N., and Bau, D.}
\newblock {\em Numerical Linear Algebra}.
\newblock SIAM, 1997.

\bibitem{trefethen1990}
{\sc Trefethen, L.~N., and Schreiber, R.~S.}
\newblock Average-case stability of gaussian elimination.
\newblock {\em SIAM Journal on Matrix Analysis and Applications 11}, 3 (1990),
  335--360.

\bibitem{tropp2011}
{\sc Tropp, J.~A.}
\newblock Improved analysis of the subsampled randomized hadamard transform.
\newblock {\em Advances in Adaptive Data Analysis 03}, 01n02 (2011), 115--126.

\bibitem{tropp2017a}
{\sc Tropp, J.~A., Yurtsever, A., Udell, M., and Cevher, V.}
\newblock Fixed-rank approximation of a positive-semidefinite matrix from
  streaming data.
\newblock In {\em Advances in Neural Information Processing Systems\/} (2017),
  I.~Guyon, U.~V. Luxburg, S.~Bengio, H.~Wallach, R.~Fergus, S.~Vishwanathan,
  and R.~Garnett, Eds., vol.~30, Curran Associates, Inc.

\bibitem{tropp2019streaming}
{\sc Tropp, J.~A., Yurtsever, A., Udell, M., and Cevher, V.}
\newblock Streaming low-rank matrix approximation with an application to
  scientific simulation.
\newblock {\em SIAM Journal on Scientific Computing 41}, 4 (2019),
  A2430--A2463.

\bibitem{tyrtyshnikov2000}
{\sc Tyrtyshnikov, E.}
\newblock Incomplete cross approximation in the mosaic-skeleton method.
\newblock {\em Computing 64}, 4 (Jun 2000), 367--380.

\bibitem{voronin2017}
{\sc Voronin, S., and Martinsson, P.-G.}
\newblock Efficient algorithms for cur and interpolative matrix decompositions.
\newblock {\em Advances in Computational Mathematics 43}, 3 (Jun 2017),
  495--516.

\bibitem{wang2012cur}
{\sc Wang, S., and Zhang, Z.}
\newblock A scalable cur matrix decomposition algorithm: Lower time complexity
  and tighter bound.
\newblock In {\em Proceedings of the 25th International Conference on Neural
  Information Processing Systems - Volume 1\/} (Red Hook, NY, USA, 2012),
  NIPS'12, Curran Associates Inc., p.~647–655.

\bibitem{woodruff2015}
{\sc Woodruff, D.~P.}
\newblock Sketching as a tool for numerical linear algebra.
\newblock {\em Found. Trends Theor. Comput. Sci. 10}, 1–2 (Oct. 2014),
  1–157.

\bibitem{woolfe2008}
{\sc Woolfe, F., Liberty, E., Rokhlin, V., and Tygert, M.}
\newblock A fast randomized algorithm for the approximation of matrices.
\newblock {\em Applied and Computational Harmonic Analysis 25}, 3 (2008),
  335--366.

\bibitem{civril2009}
{\sc Çivril, A., and Magdon-Ismail, M.}
\newblock On selecting a maximum volume sub-matrix of a matrix and related
  problems.
\newblock {\em Theoretical Computer Science 410}, 47 (2009), 4801--4811.

\end{thebibliography}

\end{document}